\documentclass{article}
\usepackage{graphicx}
\usepackage[nonatbib, preprint]{neurips_2025}
\usepackage{amsmath, amsfonts, amssymb}
\usepackage{algorithm}
\usepackage[noend]{algpseudocode}
\usepackage{makecell}
\usepackage{wrapfig}
\usepackage{enumitem}
\usepackage{booktabs}
\usepackage{multirow}
\usepackage{fancyhdr}
\usepackage{comment}
\usepackage{bbm}
\usepackage{caption}
\usepackage{subcaption}
\usepackage{siunitx}
\usepackage{xcolor}
\usepackage{amsthm}
\usepackage{dsfont}
\usepackage{hyperref}
\usepackage{biblatex}
\usepackage{array}
\bibliography{ref}
\usepackage{siunitx}
\usepackage{listings}
\usepackage{float}
\newfloat{listing}{h}{lop}
\floatname{listing}{Listing}
\lstdefinestyle{pystyle}{
  language=Python,
  basicstyle=\ttfamily\small,
  keywordstyle=\color{blue!70!black}\bfseries,
  commentstyle=\color{gray},
  stringstyle=\color{teal},
  numbers=none,
  showstringspaces=false,
  breaklines=false,
  escapeinside={(*@}{@*)},
}
\numberwithin{equation}{section}
\newtheorem{theorem}{Theorem}
\newtheorem{proposition}{Proposition}
\newtheorem{lemma}{Lemma}

\DeclareMathOperator*{\argmax}{arg\,max}

\usepackage{xspace}

\newcommand{\interior}{\operatorname{int}}
\newcommand{\unitvector}{\mathds{1}_n}

\title{Refined outer-approximation algorithms for monotonic optimisation}
\author{
  Ahmed Rashwan\thanks{\texttt{ar3009@bath.ac.uk}} \\
  University of Bath
}
\date{}

\begin{document}

\maketitle
\setcounter{footnote}{0}

\begin{abstract}
Monotonic optimisation is a broad class of non-convex problems formulated in terms of monotone functions. Such problems are commonly solved via the \emph{polyblock outer-approximation algorithm (POA)}, a branch-and-bound method that iteratively refines a rectangular outer-approx\-imation of the feasible set.
POA scales poorly, however: the number of vertices needed to describe the approximation can grow exponentially, leading to large memory requirements and increasingly expensive subroutines.
To address these limitations, we propose three algorithmic improvements: a generalised anchor selection that yields an optimal balanced monotonicity cut, a relaxed optimality condition that guarantees finite termination without continuity assumptions, and a vectorised variant that processes multiple nodes concurrently.
We further develop an efficient tree-based implementation, which accelerates POA's core subroutines while storing the outer-approximation compactly.
Numerical experiments show that these improvements yield order-of-magnitude speed-ups over standard POA and solve problems on which existing variants fail.
Finally, we introduce \texttt{polyblocks}\footnote{Available at \url{https://github.com/RashwanA/polyblocks}}, an open-source Python package implementing the proposed algorithms alongside a framework for developing new POA variants.
\end{abstract}

\section{Introduction}
An optimisation problem is said to be \emph{monotonic} if it is formulated in terms of non-decreasing (or non-increasing) functions.
Monotonicity is a common, and often easy to identify, structure in optimisation which can be exploited to find globally optimal solutions for otherwise difficult non-convex problems.
Monotonic optimisation consequently forms one of the broadest classes of tractable global optimisation problems, and has found applications in areas such as polynomial and fractional optimisation \cite{tuy_nonconvex_quadratic, tuy_polynomial_fractional}, as well as resource allocation in communications \cite{monotone_opt_comms_book} and sensor network planning \cite{tuy_sensor_cover}.

\paragraph{Monotonic optimisation}
The systematic study of monotonic optimisation originates with \cite{monotone_opt_tuy, monotone_multi_opt}, which established the canonical problem form and its solution via outer approximation, consolidating an earlier line of work on problems with low-dimensional non-convex structures \cite{low-rank_nonconvex_book}; see \cite{tuy_book, global_opt_book} for a broader treatment of related global optimisation problems.
These methods rest on the observation that a monotone function defined over a rectangular domain attains its optimum at one of the domain's vertices.
This property underpins the \emph{polyblock outer-approximation algorithm} (POA) \cite{monotone_opt_tuy}: a branch-and-bound method which iteratively refines a union of rectangles, or a \emph{polyblock}, containing the feasible set.
Each iteration removes a cone which is provably infeasible, in direct analogy to the separating hyperplanes used by polyhedral outer-approximation methods for concave minimisation \cite{polyhedral-outer-approximation}.
Subsequent work extended the framework in several directions, including POA variants which combine polyblock refinement with additional cuts \cite{monotonic_opt_tuy2} and problems exhibiting mixed monotonicity \cite{partial_montone}.
A recurring difficulty is that POA may fail to terminate when the feasible set is thin or nearly degenerate, motivating the robust reformulation of \cite{sit_algorithm}, which restores termination under continuity assumptions.

Much of the practical development of these methods has been driven by wireless resource allocation, where monotonicity arises naturally in rate and interference expressions \cite{montone_opt_MISO, montone_opt_beamform}.
This literature has produced most of the reported large-scale experience with POA, but the resulting implementations are typically specialised to a particular problem family rather than offered as general-purpose solvers.

\paragraph{Scalability and software}
Despite this activity, POA's practical applicability remains largely restricted to low-dimensional settings.
The principal limitation is that the number of vertices describing the outer approximation can grow exponentially, leading to substantial memory requirements and increasingly expensive subroutines.
This growth is more severe than in conventional branch-and-bound: the number of nodes generated by a single POA refinement scales with the problem's dimension, whereas branch-and-cut methods for mixed-integer programming, for instance, typically generate a fixed number of nodes per expansion, usually just two.
The considerable engineering effort invested in such solvers \cite{baron, scip_suite} has no counterpart in monotonic optimisation, and we are not aware of any publicly available software for solving general monotonic optimisation problems.
Consequently, the implementation techniques which make branch-and-bound practical have received little attention in this setting.

\paragraph{Contributions}
In this paper, we address these scalability limitations through a series of improvements to both the POA algorithm and its implementation.
We first present a generalisation of POA which allows for more freedom in how the algorithm explores its search space, and show that this naturally leads to a balanced monotonicity cut with optimal worst-case performance (Section~\ref{ssec:balanced_anchors}).
We then introduce a relaxed optimality condition which improves POA's robustness, guaranteeing finite termination with an explicit iteration bound (Section~\ref{ssec:relaxed_opt}).
We also propose a vectorisation scheme that refines several vertices concurrently, in the spirit of the node-level parallelism exploited by parallel branch-and-bound solvers \cite{parallel_bnb_survey, parallel_milp_solvers} (Section~\ref{ssec:vectorising}).
To support these developments, we present a tree-based implementation of POA (Section~\ref{sec:implementation}) that substantially accelerates its core subroutines, enabling larger, more expressive approximations within the same compute budget.

To make our contributions accessible, we introduce \texttt{polyblocks} (Section~\ref{sec:software}): a small Python package implementing the algorithms developed in this paper.
Beyond providing these reference implementations, the package exposes a flexible interface that allows users to implement custom POA variants with minimal effort.
Ablation experiments in Section~\ref{sec:results} demonstrate the effectiveness of our proposed improvements over a baseline POA implementation.

\section{Preliminaries}
We begin by fixing notation and recalling the ingredients of monotonic optimisation used throughout the paper: the canonical problem form, polyblocks as outer approximations of normal sets, and the monotone projection by which those approximations are refined.

\paragraph{Notation}
For $x, y \in \mathbb{R}^n$, we denote the partial order $x_i \leq y_i, \ \forall i$ by $x \leq y$ and define $x \geq y,\, x < y,\, \text{and } x > y$ similarly.
We define the closed box $[x, y]:= \Pi_i [x_i, y_i]$, with open and half-open boxes defined analogously.
For a point $x \in \mathbb{R}^n$ and a set $S \subset \mathbb{R}^n$, we define the rectangular sets
$$
P(x) := (-\infty, x], \ \text{ and } \
P(S) := \bigcup\nolimits_{x\in S} P(x).
$$
If $V \subset \mathbb{R}^n$ is a finite set, then $P(V)$ is said to be a \emph{polyblock}.
For any $n \in \mathbb{N}$, let $[n] = \{1,\hdots, n \}$ and let $\unitvector \in \mathbb{N}^n$ denote the vector with unit components.

\paragraph{Monotone functions and normal sets}
A function $f:\mathbb{R}^n \to \mathbb{R}$ is \emph{increasing} if $f(x) \leq f(y)$ whenever $x \leq y$.
When used as constraints, increasing functions induce a corresponding closure property under the partial order.
We call a set $S \subset \mathbb{R}^n$ \emph{normal} when
$$x \in S \implies P(x) \subset S,$$
or equivalently when $P(S) = S$.
The set is \emph{co-normal} when its complement is normal, or equivalently when $x \in S$ implies $[x, \infty) \subset S$.
Both classes arise directly from constraints on monotone functions: if $f$ is increasing, then the sublevel set $\{x\in \mathbb{R}^n: f(x) \leq 0\}$ is normal, while the superlevel set $\{x\in \mathbb{R}^n: f(x) \geq 0\}$ is co-normal.
Polyblocks belong to the same class: each $P(x)$ is normal by construction, and normality is preserved under unions, so $P(V)$ is a normal set specified by the finitely many vertices in $V$.
Polyblocks can therefore act as tractable approximations of a normal set, which is the role they play in Section~\ref{sec:POA_intro}.

\paragraph{Monotonic optimisation problems}
Let $f$ be an increasing function on $\mathbb{R}^n$, $\mathcal{G} \subset \mathbb{R}^n$ be a closed normal set, and $\mathcal{H} \subset \mathbb{R}^n$ be a closed co-normal set.
We consider monotonic optimisation problems in the canonical form:
\begin{align}~\label{eq:monotone-canon}
    \max_{x \in [a,b] } f(x), \ \ \text{s.t. } x \in \mathcal{G} \cap \mathcal{H},
\end{align}
where $a,b \in \mathbb{R}^n$.
If $a \notin \mathcal{G}$, then the normality of $\mathcal{G}$ implies $[a, \infty) \cap \mathcal{G} = \emptyset$ and the problem is infeasible.
We therefore assume throughout that $a\in\mathcal{G}$.
An analogous argument also justifies the assumption $b \in \mathcal{H}$.
We only assume black-box access to the function $f$, as well as feasibility oracles for querying $x \in \mathcal{G}$ and $x \in \mathcal{H}$.

Despite its simple form, \eqref{eq:monotone-canon} is considerably more general than it first appears.
In particular, it is equivalent to the seemingly larger class of difference-of-monotone problems:
\begin{align*}
\begin{split}
    \max_{x \in [a,b] } \quad &f_1(x) - f_2(x) \\
    \ \text{s.t.} \quad & g_j(x) - h_j(x) \leq 0,\ j \in [m],\\
\end{split}
\end{align*}
where all functions $f_j,\, g_j,\, h_j$ are increasing and each variable $x_i$ may be either continuous or discrete \cite{tuy_book}.
This generality is what makes monotonic optimisation broadly applicable, and equally what makes efficient global solution methods difficult to develop.

\paragraph{Monotone projections}
Let $\mathcal{G}\subset\mathbb{R}^n$ be a closed normal set and let $y\in\mathcal{G}$ be a feasible anchor point.
The monotone projection of a point $x > y$ onto $\mathcal{G}$ with respect to $y$ is defined as
\begin{equation} \label{eq:mono_proj}
    \pi_\mathcal{G}(x;y) := r_\mathcal{G}(x; y)\, x + (1 - r_\mathcal{G}(x; y))\,y,
\end{equation}
where
$$r_\mathcal{G}(x; y) := \sup \{r \in [0,1]: r x + (1 - r)y \in \mathcal{G}\}$$
is the largest coefficient for which the convex combination of $x$ and $y$ remains in $\mathcal{G}$.
Monotone projections are flanked by two cones, one lying inside $\mathcal{G}$ and the other outside it.
This property is used both to compute $\pi_\mathcal{G}$ and to discard infeasible points in the outer approximation:
\begin{proposition}\label{prop:cone_separate}
    Let $\mathcal{G} \subset \mathbb{R}^n$ be a closed normal set, let $y \in \mathcal{G}$, and let $x > y$ with $x \notin \interior\,\mathcal{G}$.
    The projection $z := \pi_\mathcal{G}(x; y)$ then satisfies
    $$(-\infty, z] \subset \mathcal{G}, \qquad (z, \infty) \cap \mathcal{G} = \emptyset.$$
\end{proposition}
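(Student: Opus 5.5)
Write $r^* := r_\mathcal{G}(x;y)$ and parametrise the segment as $\gamma(r) := r x + (1-r) y$, so that $z = \gamma(r^*)$. The plan is first to show $z \in \mathcal{G}$. The set $R := \{r \in [0,1] : \gamma(r) \in \mathcal{G}\}$ contains $0$ because $y \in \mathcal{G}$, so it is nonempty and $r^* = \sup R$ is well defined. Since $\gamma$ is continuous and $\mathcal{G}$ is closed, $R$ is closed. Hence $r^* \in R$, that is, $z \in \mathcal{G}$. Normality of $\mathcal{G}$ then gives $(-\infty, z] = P(z) \subset \mathcal{G}$, which is the first inclusion.

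For the second claim I will argue by contradiction. Suppose some $w \in \mathcal{G}$ satisfies $w > z$. The key step is to show that $z$ then lies in $\interior\,\mathcal{G}$. The open box $(-\infty, w)$ is an open set. It contains $z$ because the inequality $w > z$ is strict. It is also contained in $P(w) \subset \mathcal{G}$ by normality. So $z \in \interior\,\mathcal{G}$.

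I then split into two cases on $r^*$.

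\emph{Case $r^* = 1$.} Then $z = x$, so $x \in \interior\,\mathcal{G}$, which contradicts the hypothesis $x \notin \interior\,\mathcal{G}$.

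\emph{Case $r^* < 1$.} Because $z$ is interior, there is an $\varepsilon > 0$ with a ball around $z$ inside $\mathcal{G}$. Continuity of $\gamma$ then gives $\gamma(r) \in \mathcal{G}$ for every $r \in (r^*, r^* + \delta)$, for some $\delta > 0$. These values of $r$ lie in $R$ and exceed $r^*$, contradicting $r^* = \sup R$.

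Both cases are contradictory, so $(z, \infty) \cap \mathcal{G} = \emptyset$.

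The main obstacle, in my view, is spotting the role of the hypothesis $x \notin \interior\,\mathcal{G}$. Without it the supremum can be $r^* = 1$ with $x$ lying strictly inside $\mathcal{G}$, and then points above $z = x$ can still be feasible. So the case split at $r^* = 1$ is essential. Note that $x > y$ is never used in an essential way beyond guaranteeing that the segment is well defined and that $z$ moves monotonically. It is not needed for the interiority argument, which uses only the strict inequality $w > z$ and normality.
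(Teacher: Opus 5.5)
Your proof is correct and follows the paper's argument essentially step for step: closedness gives attainment of the supremum and normality gives the first inclusion, then interiority of $z$ yields a contradiction in both cases $r_\mathcal{G}(x;y)=1$ and $r_\mathcal{G}(x;y)<1$. Your extra details are sound: closedness of $R$ as a preimage under $\gamma$, openness of $(-\infty,w)$, and the observation that $x>y$ is not needed for the proposition itself. In the case $r^*<1$, just take your step no larger than $1-r^*$ so that the nearby parameters stay in $[0,1]$.
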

\begin{proof}
    As $\mathcal{G}$ is closed, the supremum defining $r_\mathcal{G}(x; y)$ is attained, so $z \in \mathcal{G}$ and normality yields the first inclusion, $P(z) \subset \mathcal{G}$.
    For the second claim, suppose such a point $w \in (z, \infty) \cap \mathcal{G}$ exists.
    By normality, $P(w) \subset \mathcal{G}$, and as $z < w$ this would make $z$ an interior point of $\mathcal{G}$.
    That is impossible when $r_\mathcal{G}(x; y) = 1$, since $z$ is then the point $x$, which is not interior by assumption.
    It is also impossible when $r_\mathcal{G}(x; y) < 1$, as an interior $z$ must have a neighbourhood of feasible points which intersects the segment from $z$ to $x$, contradicting the maximality of $r_\mathcal{G}(x; y)$.
\end{proof}

Since the line segment from $y$ to $x$ is strictly increasing in every component, it is contained in the union of these two cones; every point below $z$ on this segment is feasible, while every point above is infeasible.
Hence, projections can be computed using a bisection search on the parameter $r$, which returns the last iterate verified to lie in $\mathcal{G}$ so that the computed projection is always feasible.
The infeasible cone $(z, \infty)$ serves a second purpose: analogous to a hyperplane separation for convex sets, it identifies the region removed from the outer approximation at each iteration of the algorithm we consider.

\section{The Polyblock Outer-Approximation algorithm}\label{sec:POA_intro}
We now present a generalisation of the original POA algorithm \cite{monotonic_opt_tuy2}, which allows more freedom in the choice of projection anchors.
We begin with the general branch-and-bound procedure that POA follows, before detailing the outer-approximation procedure itself.

\subsection{The branch-and-bound procedure}
Let $\mathcal{F} = \mathcal{G} \cap \mathcal{H} \cap [a, b]$ be the feasible region of problem~\eqref{eq:monotone-canon}.
Given some tolerance $\epsilon >0 $, our goal is to find a feasible solution $\hat{z} \in \mathcal{F}$ satisfying
\begin{equation}\label{eq:eps_optimal}
    f(\hat{z}) + \epsilon > f(x), \ \forall x\in \mathcal{F},
\end{equation}
that is, an $\epsilon$-optimal solution.

POA achieves this using a branch-and-bound strategy.
At each iteration $t$ it generates a candidate point $z^t\in\mathcal G$, which may or may not satisfy the remaining constraints $\mathcal H \cap [a, b]$.
The algorithm also maintains the best feasible candidate found so far
\begin{equation} \label{eq:best_candidate}
\hat z^t \in \argmax_{z^\tau \in \mathcal{F},\; \tau < t} f(z^\tau),
\end{equation}
whenever such a point exists, together with its objective value
$$
\hat f^t :=
\begin{cases}
f(\hat z^t), &
\{z^\tau: z^\tau \in \mathcal{F},\, \tau<t \} \neq \emptyset,\\
-\infty, & \text{otherwise}.
\end{cases}
$$
When $\hat{z}^t$ exists, only feasible points with objective values $ \geq \hat f^t+\epsilon$ violate its $\epsilon$-optimality \eqref{eq:eps_optimal}.
Consequently, at each iteration $t$, it is sufficient to continue searching for candidates only within the reduced feasible region
$$
\mathcal F^t :=
\{ x \in \mathcal{F} : f(x) \geq \hat f^t+\epsilon \}.
$$
If $\mathcal F^t=\emptyset$, then either the original problem is infeasible (when no feasible candidate has been found) or $\hat z^t$ is $\epsilon$-optimal.

To represent the remaining search space $\mathcal{F}^t$, POA constructs a sequence of polyblock outer-approximations:
$$P(V^t) \supset \mathcal{F}^t,$$
where each $V^t \subset [a, b]$ is a finite set of vertices.
Since $f$ is increasing, its maximum over the polyblock $P(V^t)$ is attained at one of its vertices:
\begin{align}\label{eq:maximiser}
\hat{x}^t \in \argmax_{x \in V^t} f(x)
\subset
\argmax_{x \in P(V^t)} f(x).
\end{align}
Thus, $f(\hat{x}^t)$ provides an upper bound on all remaining feasible solutions, while $\hat{f}^t$ provides a corresponding lower bound:
$$f(\hat{x}^t) \geq \sup_{x \in \mathcal{F}^t} f(x) > \hat{f}^t.$$

The POA algorithm repeatedly refines the polyblock $P(V^t)$ to tighten these bounds while simultaneously improving the incumbent solution $\hat z^t$.
The algorithm terminates once $V^t=\emptyset$, at which point $\mathcal F^t=\emptyset$ and the incumbent solution, if one exists, is $\epsilon$-optimal.

\subsection{Polyblock refinement process}\label{ssec:poly_refinement}
POA is initialised with a single polyblock vertex $V^0=\{b\}$.
Each iteration replaces the current vertex set $V^t$ with a refined vertex set $V^{t+1}$ that represents a tighter outer approximation of the remaining search space $\mathcal{F}^{t+1}$.

The refinement begins by selecting the maximal vertex
$\hat{x}^t\in\arg\max_{x\in V^t}f(x),$
as defined in \eqref{eq:maximiser}, together with any projection anchor $y^t\in\mathcal{G}$ lying strictly below it, $y^t < \hat{x}^t$, and satisfying
\begin{equation}
\label{eq:anchor_constr}
    \frac{\min_i(\hat{x}^t_i-y^t_i)}
         {\max_i(\hat{x}^t_i-y^t_i)}
    \geq\rho,
\end{equation}
where $\rho\in(0,1]$ is fixed throughout the algorithm.
Since $a\in\mathcal{G}\cap P(\hat{x}^t)$, a simple choice is
\begin{equation}
\label{eq:old_anchor}
    y^t = a - \frac{\rho\|b-a\|_\infty}{1-\rho}\,\unitvector, \ \ \text{for any $\rho\in(0,1)$.}
\end{equation}
The anchor $y^t$ is then used to obtain the candidate $z^t$ by projecting $\hat{x}^t$ onto $\mathcal{G}$:
$$z^t := \pi_{\mathcal{G}}(\hat{x}^t; y^t).$$
As we show below, each polyblock vertex set satisfies $V^t \cap \interior\, \mathcal{G} = \emptyset$, so Proposition~\ref{prop:cone_separate} applies to $\hat{x}^t$ and the cone $(z^t,\infty)$ is disjoint from $\mathcal{G}$, and hence from the feasible region $\mathcal{F}$.
As a result, this cone can be removed from the current polyblock $P({V^{t}})$ without excluding any feasible points:
$$P(V^t) \backslash (z^t,\infty) \supset \mathcal{F}^t.$$

The cone $(z^t,\infty)$ partitions the current vertex set into
$$
F^t = V^t \backslash (z^t,\infty),
\quad
I^t = V^t\cap(z^t,\infty),
$$
corresponding to feasible and infeasible vertices, respectively.
The refined vertex set is obtained by leaving $F^t$ unchanged while replacing all vertices in $I^t$ to remove the infeasible portion of their associated polyblocks.

Each infeasible vertex $s\in I^t$ is replaced by $n$ new vertices, each obtained by reducing one coordinate of $s$ to the corresponding coordinate of $z^t$.
For $u,v \in \mathbb{R}^n$ and $i \in [n]$, write
$$ \varrho_i(u,v) := (u_1,\hdots,u_{i - 1}, v_i, u_{i+1}, \hdots, u_n)$$
for the vector obtained from $u$ by replacing its $i$-th component with that of $v$; the new vertices are then given by $\{\varrho_i(s, z^t)\}_{i \in [n]}$.
These vertices satisfy
$$ P(\{\varrho_i(s, z^t)\}_{i\in[n]}) = P(s) \backslash (z^t,\infty), $$
and therefore exactly describe the polyblock obtained by removing the infeasible cone from $P(s)$.

Taking the union over all of $I^t$ gives a valid refinement representation, but a wasteful one: a new vertex dominated by another, $\varrho_i(s, z^t) \leq u$, contributes nothing, as $P(u) \supseteq P(\varrho_i(s, z^t))$, and may be discarded.
Such vertices can be identified without building the union first, using the characterisation from~\cite{monotone_opt_tuy}:
\begin{equation} \label{eq:refined_vertices}
    W^t := \{ \varrho_i(s, z^t) : \varrho_i(s, z^t) \notin P(\bar{I}^t\backslash\{s\}),\, s\in I^t, \, i\in[n] \},
\end{equation}
where
$$\bar{I}^t := V^t \cap [z^t, \infty).$$
This test compares each new vertex against the old vertex set alone, rather than against all other new vertices.
This suffices as domination is inherited by the refinement: if $u \in \bar{I}^t \backslash \{s\}$ dominates $\varrho_i(s, z^t)$, then so does $\varrho_i(u, z^t)$, as the two agree in their $i$-th component.
The vertices which survive reproduce the refined outer approximation exactly:
$$P(F^t \cup W^t) = P(V^t) \backslash (z^t, \infty).$$

Two further prunings are available at no cost to correctness.
Since $\mathcal H^c$ is normal, any vertex outside $\mathcal H\cap[a,b]$ may be removed without excluding feasible points, leaving the feasible new vertices
$$N^t := W^t \cap \mathcal{H} \cap [a, b].$$
Likewise, any vertex satisfying $f(s)<\hat f^{t + 1}+\epsilon$ cannot belong to $\mathcal F^{t+1}$.
Applying both pruning steps gives the vertex update
\begin{equation} \label{eq:vertex_update}
    V^{t+1} = \{ s \in F^t \cup N^t: f(s) \geq \hat{f}^{t + 1} + \epsilon \}
\end{equation}
which retains the guarantee $V^{t+1} \supset \mathcal{F}^{t+1}$.
It also closes the induction assumed above: each new vertex satisfies $\varrho_i(s, z^t) \in [z^t,\infty)$ and so avoids $\interior\mathcal G$, while those carried over from $F^t$ avoid it by hypothesis, preserving $V^{t+1} \cap \interior \mathcal{G} = \emptyset.$
A similar induction gives $V^t \subset \mathcal{H}\cap[a,b]$, and hence $\hat{x}^t \in \mathcal{H}\cap[a,b]$, as $b$ lies in $\mathcal{H}\cap[a,b]$ by assumption and the pruning to $N^t$ imposes the same constraint on each new vertex.

This refinement procedure is repeated until $V^t=\emptyset$.
Algorithm~\ref{alg:poa} summarises the complete POA algorithm, while Figure~\ref{fig:tree_polyblock}\subref{fig:vertex_form} illustrates the construction of the first three polyblock approximations.

\begin{figure}[h]
    \centering
    \begin{minipage}[b]{0.50\textwidth}
        \centering
        \includegraphics[width=\linewidth, trim=26 33 293 119, clip]{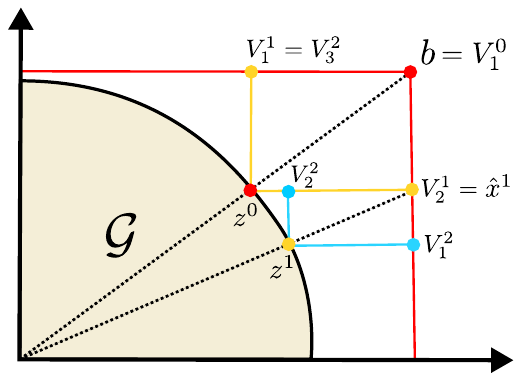}
        \subcaption{Polyblock refinement.}
        \label{fig:vertex_form}
    \end{minipage}
    \hspace{0.04\textwidth}
    \begin{minipage}[b]{0.285\textwidth}
        \centering
        \includegraphics[width=\linewidth, trim=382 41 69 144, clip]{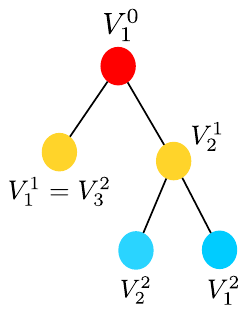}
        \subcaption{Vertex tree.}
        \label{fig:tree_form}
    \end{minipage}
    \caption{
    The first two POA iterations on a normal set $\mathcal{G}\subset\mathbb{R}^2$, where $V^t_i$ denotes the $i$-th vertex of the polyblock at iteration $t$ and colours indicate the iteration at which each vertex was generated.
    The initial vertex $V^0_1 = b$ is projected onto $\mathcal{G}$ to give $z^0$ and replaced by $V^1_1$ and $V^1_2$; projecting the new maximal vertex $\hat{x}^1 = V^1_2$ gives $z^1$ and a second refinement.
    The resulting vertex tree is discussed in Section~\ref{ssec:vertex_tree}.
    }
    \label{fig:tree_polyblock}
\end{figure}

\begin{algorithm}
    \caption{The Polyblock Outer-Approximation algorithm}
    \label{alg:poa}
    \small\begin{algorithmic}[0]
    \State \textbf{Inputs:} Problem data: $(f,\, \mathcal{G},\, \mathcal{H},\, a,\, b)$,
    Solver parameters: $(\epsilon,\, \rho)$
    \State \textbf{Output:} An $\epsilon$-optimal solution,
    or NULL if $\mathcal{F}$ is empty

    \State $V \gets \{b\}$
    \State $\hat{z} \gets \text{NULL}$
    \State $\hat{f} \gets -\infty$
    \While{$V \neq \emptyset$}
        \State $\hat{x} \gets \argmax_{x\in V} f(x)$
        \State $y \gets \text{any}\ y\in \mathcal{G}\ \text{with}\ y < \hat{x}\ \text{satisfying \eqref{eq:anchor_constr}}$
        \State $z \gets \pi_\mathcal{G}(\hat{x}; y)$
        \If{$f(z) > \hat{f}$ and $z \in \mathcal{H} \cap [a,b]$}
            \State $\hat{z} \gets z$
            \State $\hat{f} \gets f(z)$
        \EndIf
        \State $V \gets$ Update as \eqref{eq:vertex_update}
    \EndWhile
    \Return $\hat{z}$
\end{algorithmic}
\end{algorithm}

If a feasible incumbent exists when the algorithm terminates, it is $\epsilon$-optimal.
Otherwise, we have $\mathcal{F} = \mathcal{F}^t = \emptyset$ and the original problem is infeasible.
When the algorithm does not terminate finitely, \cite{monotonic_opt_tuy2} showed that, under upper-continuity of $f$ and assuming \eqref{eq:old_anchor}, the sequences $(\hat{x}^t)_t$ and $(z^t)_t$ both converge to optimal solutions.
We provide a slight generalisation of this result:
\begin{theorem} \label{thm:POA_old}
    For a set $S \subset \mathbb{R}^n$ and a point $x$, write $d(x, S) := \inf_{s \in S} \|x - s\|_\infty$.
    Assume that $f$ is upper-continuous on $\mathcal{H}$ and let $\mathcal{F}^\star := \argmax_{x \in \mathcal{F}} f(x)$ be the solution set of \eqref{eq:monotone-canon}.
    If Algorithm \ref{alg:poa} is infinite, then $\mathcal{F} \neq \emptyset$, $d(\hat{x}^t, \mathcal{F}^\star) \to 0$, and $d(z^t, \mathcal{F}^\star) \to 0$.
\end{theorem}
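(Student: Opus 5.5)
The plan is to show that the gap $\|\hat{x}^t - z^t\|_\infty$ vanishes, to identify the limit points of $(\hat{x}^t)_t$ as feasible, and then to use the monotone upper bounds together with upper-continuity to show that these limit points are optimal.

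\textbf{Step 1 (geometric gap bound).} By definition of the monotone projection, $\hat{x}^t - z^t = (1-r_\mathcal{G}(\hat{x}^t;y^t))(\hat{x}^t - y^t)$. The anchor condition \eqref{eq:anchor_constr} then gives
$$\min_i(\hat{x}^t_i - z^t_i) \geq \rho\,\|\hat{x}^t - z^t\|_\infty$$
for every $t$. For any $t' > t$, the refinement guarantees $P(V^{t'}) \subset P(V^{t+1}) \subset P(V^t)\backslash(z^t,\infty)$, since the pruning steps only remove vertices. Hence $\hat{x}^{t'} \notin (z^t,\infty)$, so some coordinate $i$ satisfies $\hat{x}^{t'}_i \leq z^t_i$. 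Combining the two facts yields
$$\rho\,\|\hat{x}^t - z^t\|_\infty \leq \hat{x}^t_i - \hat{x}^{t'}_i \leq \|\hat{x}^t - \hat{x}^{t'}\|_\infty .$$

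\textbf{Step 2 (gap vanishes).} All $\hat{x}^t$ lie in the compact box $[a,b]$. Any subsequence therefore has a further subsequence $\hat{x}^{t_k}$ that converges, and in particular is Cauchy. Applying Step 1 with $t = t_k$ and $t' = t_{k+1}$ shows $\|\hat{x}^{t_k} - z^{t_k}\|_\infty \to 0$ along it. Because every subsequence of $\|\hat{x}^t - z^t\|_\infty$ has a sub-subsequence tending to $0$, the whole sequence tends to $0$. It follows that $(\hat{x}^t)_t$ and $(z^t)_t$ have the same limit points.

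\textbf{Step 3 (feasibility of limit points).} Let $\bar{x}$ be a limit point of $(\hat{x}^t)_t$. It is also a limit of a subsequence of $(z^t)_t \subset \mathcal{G}$, so $\bar{x} \in \mathcal{G}$ because $\mathcal{G}$ is closed. The induction in Section~\ref{ssec:poly_refinement} gives $\hat{x}^t \in \mathcal{H}\cap[a,b]$, which is also closed, so $\bar{x} \in \mathcal{F}$. In particular $\mathcal{F}\neq\emptyset$, and $f^\star := \sup_{\mathcal{F}} f$ is attained because $f$ is upper-continuous on the compact set $\mathcal{F}\subset\mathcal{H}$.

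\textbf{Step 4 (optimality; the main obstacle).} The delicate point is that the incumbent may already be within $\epsilon$ of optimal, so that $\mathcal{F}^t$ could be empty. This would seem to break the bound $f(\hat{x}^t) \geq f^\star$. I would handle it as follows.

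The values $\hat{f}^t$ are nondecreasing and bounded by $f^\star$, so they converge to some $\hat{f}^\infty \leq f^\star$. Pruning \eqref{eq:vertex_update} ensures $f(\hat{x}^t) \geq \hat{f}^t + \epsilon$ whenever $V^t\neq\emptyset$. Upper-continuity at $\bar{x}$ then gives
$$f^\star \geq f(\bar{x}) \geq \limsup f(\hat{x}^t) \geq \hat{f}^\infty + \epsilon .$$
Hence any $x^\star \in \mathcal{F}^\star$ satisfies $f(x^\star) \geq \hat{f}^t + \epsilon$ for all $t$, so $x^\star \in \mathcal{F}^t \subset P(V^t)$. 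By \eqref{eq:maximiser} this gives $f(\hat{x}^t) \geq f^\star$ for every $t$.

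Upper-continuity then yields $f(\bar{x}) \geq f^\star$, so $\bar{x} \in \mathcal{F}^\star$. Every limit point of $(\hat{x}^t)_t$ therefore lies in $\mathcal{F}^\star$. Since the sequence lies in the compact set $[a,b]$, a standard contradiction argument gives $d(\hat{x}^t, \mathcal{F}^\star) \to 0$: otherwise a subsequence staying at distance at least $\delta>0$ would have a limit point outside $\mathcal{F}^\star$. Finally, Step 2 transfers the conclusion to $z^t$, giving $d(z^t, \mathcal{F}^\star) \to 0$.
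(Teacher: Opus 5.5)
Your proof is correct and shares the paper's skeleton: the $\rho$-bound $\min_i(\hat{x}^t_i - z^t_i) \geq \rho\|\hat{x}^t - z^t\|_\infty$, the fact that later maximal vertices avoid earlier cones $(z^t,\infty)$, closedness for feasibility of limit points, and upper-continuity for optimality. Two steps are argued differently.

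To show that the gap vanishes, the paper uses a packing argument. If $\|\hat{x}^t - z^t\|_\infty \geq \eta$ infinitely often, the boxes $(\hat{x}^t - \rho\eta\,\unitvector, \hat{x}^t)$ are pairwise disjoint, each has volume $(\rho\eta)^n$, and all lie in a bounded region. You instead turn the same separation fact into the metric inequality $\rho\|\hat{x}^t - z^t\|_\infty \leq \|\hat{x}^t - \hat{x}^{t'}\|_\infty$ and apply it along Cauchy subsequences. Your version avoids volumes altogether. The packing version is implicitly quantitative: it bounds how many iterations can have gap at least $\eta$.

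In Step 4, the obstacle you anticipate does not actually arise. The pruning bound $f(\hat{x}^t) \geq \hat{f}^t + \epsilon$ that you already cite gives $f(x) < \hat{f}^t + \epsilon \leq f(\hat{x}^t)$ for every $x \in \mathcal{F}\setminus\mathcal{F}^t$. For $x \in \mathcal{F}^t \subset P(V^t)$ you have $f(x) \leq f(\hat{x}^t)$ directly. So $f(\hat{x}^t) \geq f(x)$ for all $x \in \mathcal{F}$ and all $t$ at once, and this is exactly the inequality the paper uses.

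Your detour through $\hat{f}^\infty$ is valid. In passing it shows that an infinite run never produces an $\epsilon$-optimal incumbent. But it costs a separate attainment argument for $f^\star$ and a second appeal to upper-continuity. In the direct route attainment comes for free, since any limit point $c$ lies in $\mathcal{F}$ and satisfies $f(c) \geq \sup_{\mathcal{F}} f$.
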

\begin{proof}

    For all $i \in [n]$ and $t \geq 0$ we compute:
    \begin{align*}
        \hat{x}^t_i - z^t_i &= \hat{x}^t_i - \left[r_{\mathcal{G}}(\hat{x}^t; y^t)\,\hat{x}^t_i  + (1-r_{\mathcal{G}}(\hat{x}^t; y^t))\, y^t_i \right] \\
        & = (1 - r_{\mathcal{G}}(\hat{x}^t; y^t))\, (\hat{x}^t_i - y^t_i).
    \end{align*}
    From definition \eqref{eq:mono_proj}, it follows that
    $$1 - r_{\mathcal{G}}(\hat{x}^t; y^t) = \frac{\|\hat{x}^t - z^t\|_\infty}{\|\hat{x}^t - y^t \|_\infty},$$
    which bounds the componentwise gap between the vertex and its projection:
    \begin{align*}
    \begin{split}
        \min_i (\hat{x}^t_i - z^t_i) &= \frac{\min_i(\hat{x}^t_i - y^t_i)}{\|\hat{x}^t - y^t\|_\infty}\, \|\hat{x}^t - z^t\|_\infty \\
        & \geq \rho\, \|\hat{x}^t - z^t\|_\infty.
    \end{split}
    \end{align*}
    We use this to show that $\hat{x}^t - z^t \to 0$.
    Indeed, if $\| \hat{x}^t - z^t\|_\infty \geq \eta$ holds for some $\eta > 0$ and infinitely many $t$, then the above bound implies $\min_i (\hat{x}^t_i - z^t_i) \geq \rho\,\eta > 0$ also occurs infinitely often.
    Since $z^t \leq \hat{x}^t - \rho\,\eta\, \unitvector$ and POA ensures $\hat{x}^s \notin (z^t, \infty)$ for $s > t$, this implies the rectangles $(\hat{x}^t - \rho\, \eta\, \unitvector,  \hat{x}^t) \subset (z^t,  \infty)$ are pairwise disjoint.
    This contradicts the boundedness of $(\hat{x}^t)_t \subset [a, b] \cap \mathcal{H}$, so we must have $\hat{x}^t - z^t \to 0$.

    The boundedness of $(\hat{x}^t)_t$ also implies that $d(\hat{x}^t, C) \to 0$ where $C \subset \mathcal{H}\cap[a,b]$ is the set of all limit points of $(\hat{x}^t)_t$.
    By $\hat{x}^t - z^t \to 0$, we also have $d(z^t, C) \to 0$, and $C \subset \mathcal{F}$ as $(z^t)_t \subset \mathcal{G}$.
    $C$ is non-empty as bounded sequences must have at least one limit point, which implies $\mathcal{F}$ is also non-empty.
    It remains to show that $C \subset \mathcal{F}^\star$, which follows from the continuity assumption along with the fact that $f(\hat{x}^t) \geq f(x), \ \forall t > 0,\, x\in \mathcal{F}$, hence $f(c) \geq f(x), \ \forall c \in C,\, x\in \mathcal{F}$.
    \end{proof}

\subsection{The vertex tree induced by refinement}\label{ssec:vertex_tree}
The refinement procedure described above naturally induces a tree structure over the set of all polyblock vertices generated up to the current iteration.
This view carries the analysis as well as the implementation: it recasts the progress made at each iteration as growth of a tree, which later yields explicit complexity bounds, and it supplies the data structure underlying the solver proposed in Section~\ref{sec:implementation}.

As with other branch-and-bound algorithms, POA can be interpreted as implicitly exploring a tree whose nodes represent subsets of the search space.
This tree is constructed in a top-down manner by iteratively expanding its leaf nodes.
In POA, each tree node represents a polyblock vertex, with the root corresponding to the initial vertex $b$.
At iteration $t$, the active leaf nodes are precisely the current vertex set $V^t$, where each leaf $s\in V^t$ represents the rectangular region $P(s)$.

Exploring a leaf $s\in I^t$ replaces it with its children
$$ \xi^t(s) := \{\varrho_i(s, z^t) : \varrho_i(s, z^t)\in V^{t+1},\; i\in[n]\}, $$
a set of up to $n$ new vertices, each corresponding to one of the refined polyblocks introduced in the previous subsection.
The union of the regions represented by these children satisfies
$$ P(\xi^t(s)) \subset P(s) \backslash (z^t, \infty), $$
and is therefore a strict subset of the region represented by the parent node.
Consequently, every explored leaf corresponds to a refinement of the remaining search space.
Since the maximal vertex $\hat{x}^t$ is explored at every iteration, the overall search space represented by the active leaves decreases strictly as the algorithm progresses.
The algorithm terminates once no active leaf nodes remain, that is, when $V^t=\emptyset$.

Formally, let
$$ T^t := \bigcup\textstyle_{i=0}^{t}V^i $$
denote the set of all vertices generated up to iteration $t$.
The parent-child relationships are then given by
$$ E^t := \{(s, u) : u \in \xi^\tau(s),\; s \in V^\tau,\; \tau \in [t]\}, $$
and the resulting vertex tree is
$$ \mathcal T^t:=(T^t,E^t).$$

This correspondence is illustrated in Figure~\ref{fig:tree_polyblock}\subref{fig:tree_form}, where each refined vertex becomes the parent of the vertices replacing it, while the unrefined vertex $V^1_1$ remains a leaf and persists into the next iteration as $V^2_3$.

\section{Improved methods}\label{sec:improvements}
In this section, we propose three improvements to the basic POA procedure of the previous section: balanced anchors, relaxed optimality conditions, and vectorisation.
We treat each in turn, discussing its advantages, before combining all three in Algorithm~\ref{alg:poa_v3}.

\subsection{Balanced anchors}\label{ssec:balanced_anchors}

POA requires a sequence of anchor points satisfying \eqref{eq:anchor_constr} to compute the projections $z^t$.
Previous implementations typically used a fixed, iteration-independent anchor, such as \eqref{eq:old_anchor}.
The choice is not innocuous, as the convergence guarantee of Theorem~\ref{thm:POA_old} depends explicitly on $\rho$, which we now show to bound the volume removed from the search space at each iteration.

The role of $\rho$ becomes clear once the refinement is expressed in terms of the anchor.
Projections shift $\hat{x}^t$ along the line segment towards $y^t$, so for any admissible anchor,
$$ \hat{x}^t_i - z^t_i = (1 - r_{\mathcal{G}}(\hat{x}^t; y^t))\, (\hat{x}^t_i - y^t_i), \quad \forall i\in[n]. $$
The region $(z^t,\hat{x}^t]$ removed at iteration $t$ is therefore a scaled copy of the box spanned by $\hat{x}^t$ and its anchor, and inherits its aspect ratio:
$$ \frac{\min_i(\hat{x}^t_i - z^t_i)}{\|\hat{x}^t - z^t\|_\infty} = \frac{\min_i(\hat{x}^t_i - y^t_i)}{\max_i(\hat{x}^t_i - y^t_i)} \geq \rho. $$
For a fixed projection distance $M := \|\hat{x}^t - z^t\|_\infty$, the removed volume is therefore at least $\rho^{n-1}M^n$.
Small values of $\rho$ permit increasingly elongated hyperrectangles, which remove smaller portions of the search space.

This guarantee is consequently maximised by an anchor lying at equal distance from $\hat{x}^t$ in every coordinate, which motivates the adaptive anchor
\begin{equation} \label{eq:new_anchors}
    y^t = \hat{x}^t - \|\hat{x}^t - a\|_\infty\,\unitvector,
\end{equation}
which lies strictly below $\hat{x}^t$, except for the degenerate case where $\hat{x}^t = a$, and is feasible since $y^t\leq a\in\mathcal G$.
Every component of $\hat{x}^t - y^t$ is identical, so every coordinate is reduced by the same amount during the projection and $(z^t,\hat{x}^t]$ is a cube.
The resulting value $\rho = 1$ is the maximum possible value of the ratio \eqref{eq:anchor_constr}.
This is also where the improvement enters the analysis, as the proof of Theorem~\ref{thm:POA_old} invokes $\rho$ only through the ratio bound above, which \eqref{eq:new_anchors} attains with equality.

The proposed anchors are also uniformly bounded:
$$
|y^t_i| \leq
|\hat{x}^t_i| + \|\hat{x}^t - a\|_\infty \leq
|b_i| +\|b - a\|_\infty,
\ \forall i\in [n].
$$
Consequently, the line segment joining $y^t$ and $\hat{x}^t$ has uniformly bounded length, ensuring that the complexity of computing $\pi_{\mathcal{G}}(\hat{x}^t; y^t)$ by bisection remains bounded throughout the algorithm.

\subsection{Relaxed optimality conditions}\label{ssec:relaxed_opt}

Balanced anchors improve the rate at which the outer approximation tightens, but they leave untouched a more basic difficulty: whether POA terminates at all.
POA often struggles when the co-normal constraint set $\mathcal{H}$ is small or contains isolated regions near the boundary of $\mathcal{G}$.
In particular, when $[a, b] \cap \mathcal{H}\not\subseteq \mathcal{G}$, the algorithm may repeatedly generate infeasible projections even when the original problem is feasible, and hence fail to terminate.
Furthermore, if the optimal solution lies in a small isolated region of $\mathcal{F}$, or directly on the boundary of $\mathcal{G}$, the convergence behaviour can become highly sensitive to numerical tolerances \cite{monotonic_opt_tuy2}.
We now propose a relaxed optimality condition which addresses these issues.

\subsubsection{The \texorpdfstring{$\delta$}{delta}-relaxed problem}
To improve robustness, we introduce a relaxed optimality criterion which disregards solutions lying within $\delta$ of the boundary of $\mathcal{G}$.
For $\delta\geq0$, define the $L_\infty$-erosion of $\mathcal{G}$ as
$$\mathcal{G}_\delta := \{x\in\mathcal{G}:B_\delta(x) \subset \mathcal{G}\},$$
where
$$B_\delta(x):=\{x+v:\|v\|_\infty\leq\delta\}$$
is the closed $\delta$-ball centred at $x$.
We say that $x^\star$ is a \emph{$\delta$-relaxed} solution of \eqref{eq:monotone-canon} if
\begin{equation}\label{eq:monotone_relax}
x^\star \in \mathcal{F},\
f(x^\star)\geq f(x),\quad \forall x\in\mathcal{F}_\delta,
\end{equation}
where
$$ \mathcal{F}_\delta := \mathcal{G}_\delta \cap \mathcal{H} \cap [a,b]. $$
In other words, the relaxed solution is a feasible point that is optimal over only the subset of points which remain feasible under any $\delta$-perturbation.
Allowing the tolerance of \eqref{eq:eps_optimal} in addition, we call $x^\star$ a \emph{$\delta$-relaxed $\epsilon$-optimal} solution when $x^\star\in\mathcal{F}$ and $f(x^\star) + \epsilon > f(x)$ for all $x\in\mathcal{F}_\delta$.
We take the relaxed problem to be infeasible if $\mathcal{F}_\delta$ is empty.

Since $\mathcal{G}$ is normal, the inclusion criterion for $\mathcal{G}_\delta$ reduces to a test on a single point.
The box $B_\delta(s) = [s - \delta\,\unitvector,\, s + \delta\,\unitvector]$ has greatest element $s + \delta\,\unitvector$, so $B_\delta(s) \subset P(s + \delta\,\unitvector)$ and
\begin{align*}
    s \in \mathcal{G}_\delta &\iff B_\delta(s) \subset \mathcal{G} \\
    &\iff s + \delta\,\unitvector \in \mathcal{G},
\end{align*}
where the reverse implication holds because normality places the whole of $P(s + \delta\,\unitvector)$ inside $\mathcal{G}$.
Moreover, $\mathcal{G}_\delta$ is itself normal.
Indeed,
\begin{align*}
    s \in \mathcal{G}_\delta &\iff s + \delta\,\unitvector \in \mathcal{G}, \\
    & \iff v + \delta\,\unitvector \in \mathcal{G},\  \forall v \leq s \\
    & \iff v \in \mathcal{G}_\delta, \ \forall v \leq s.
\end{align*}

The relaxed problem retains the monotonic structure required by POA while avoiding the pathological behaviour caused by solutions lying arbitrarily close to the boundary of $\mathcal{G}$.

\subsubsection{Modifications to POA}
Since every optimal solution of \eqref{eq:monotone-canon} is also a $\delta$-relaxed solution, POA can be applied directly to the relaxed problem.
However, the relaxation also enables three modifications which substantially improve the efficiency of the algorithm.

\paragraph{Tighter polyblock approximation}
Since optimisation is restricted to $\mathcal{F}_\delta$, it is sufficient to use the tightened outer approximation
$$P(V^t)\supset\mathcal{F}_\delta^t,$$
where
$\mathcal{F}^t_\delta := \{x\in\mathcal{F}_\delta: f(x) \geq \hat{f}^t + \epsilon\}$
is the relaxed counterpart of the reduced feasible region $\mathcal{F}^t$ of Section~\ref{sec:POA_intro}.
This approximation is constructed following the refinement procedure of Section~\ref{ssec:poly_refinement}, replacing $\mathcal{G}$ with its eroded counterpart $\mathcal{G}_\delta$.
Projections onto $\mathcal{G}_\delta$ require no new machinery: the feasibility oracle follows from the simplified inclusion criterion $x \in \mathcal{G}_\delta \iff x + \delta\, \unitvector \in \mathcal{G}$, and a valid $\mathcal{G}_\delta$ anchor is obtained by shifting any $\mathcal{G}$ anchor backwards by $\delta$.
In particular, the balanced anchor \eqref{eq:new_anchors} of a vertex $\hat{x}^t \in V^t$ becomes
$$y^t = \hat{x}^t - (\|\hat{x}^t - a\|_\infty + \delta)\, \unitvector.$$
From here onwards, the notation of Section~\ref{ssec:poly_refinement} refers throughout to the tightened approximation; in particular the projection is
$$z^t = \pi_{\mathcal{G}_\delta}(\hat{x}^t; y^t),$$
and $V^t$, $W^t$, $N^t$, $F^t$, $I^t$ denote the corresponding vertex sets.

\paragraph{Partial refinement}
Maintaining the tighter outer approximation suggests refining every vertex contained in the infeasible cone $(z^t, \infty)$.
However, if a vertex $s$ lies close to the boundary of this cone, then the region $(z^t, s]$ removed from the remaining search space is often very small.
Refining such a vertex nevertheless generates up to $n$ new vertices, increasing the size of the polyblock representation.
Since the efficiency of POA depends critically on the number of stored vertices, these refinements provide little benefit relative to their computational cost.
We therefore refine only the vertices lying at least $\delta$ above the projection,
$$I^t_\delta := V^t \cap (z^t + \delta\,\unitvector, \infty),$$
thereby avoiding refinements which remove a region smaller than the relaxation scale $\delta$.
The remaining vertices $F^t_\delta := V^t \backslash I^t_\delta$ are carried over unchanged, taking the place of the feasible vertices $F^t$ of Section~\ref{ssec:poly_refinement}.

After removing redundant vertices, the refined vertex set becomes
\begin{equation}\label{eq:refined_vertices2}
    W^t = \{\varrho_i(s, z^t):
    \varrho_i(s, z^t) \notin P(\bar{I}^t \backslash \{s\} ),\,
    s \in I^t_\delta ,\,
    i\in [n] \},
\end{equation}
which, together with $F^t_\delta$ in place of $F^t$, replaces \eqref{eq:refined_vertices} in the vertex update \eqref{eq:vertex_update}.

Although this modification is not required for correctness, it prevents the algorithm from generating large numbers of vertices in exchange for only negligible reductions in the remaining search space.
As we show below, it also preserves the convergence guarantees of the relaxed algorithm.

\paragraph{Shifted candidates}

The relaxed optimality condition \eqref{eq:monotone_relax} requires only that the returned solution lie in $\mathcal{F}$, not in the smaller set $\mathcal{F}_\delta$.
Candidates therefore need not belong to the eroded set $\mathcal{G}_\delta$, and each may be raised by up to $\delta$ when updating the incumbent, improving its objective value at no additional cost.
In place of the projection $z^t \in \mathcal{G}_\delta$, we thus consider the shifted candidate
$$c^t := \min(z^t + \delta\,\unitvector,\, b),$$
where the minimum is taken componentwise.

This shift preserves feasibility while never decreasing the objective.
Since $c^t \leq z^t + \delta\,\unitvector \in \mathcal{G}$, the normality of $\mathcal{G}$ gives $c^t \in \mathcal{G}$.
Suppose further that the projection is feasible, $z^t \in \mathcal{F}_\delta$.
Then $a \leq z^t \leq c^t \leq b$ gives $c^t \in [a,b]$, while $c^t \geq z^t \in \mathcal{H}$ gives $c^t \in \mathcal{H}$ because $\mathcal{H}$ is co-normal.
Hence $c^t \in \mathcal{F}$, and $f(c^t) \geq f(z^t)$ as $f$ is increasing.
Accordingly, we replace the best candidate \eqref{eq:best_candidate} with
$$\hat z^t = \argmax_{c^\tau \in \mathcal{F},\; \tau < t} f(c^\tau).$$

Algorithm~\ref{alg:poa_v2} summarises the resulting POA algorithm, combining the three $\delta$-relaxation modifications above with the balanced anchor strategy.

\begin{algorithm}
    \caption{The $\delta$-relaxed POA algorithm with balanced anchors}
    \label{alg:poa_v2}
    \small\begin{algorithmic}[0]
    \State \textbf{Inputs:} Problem data: $(f,\, \mathcal{G},\, \mathcal{H},\, a,\, b)$, \ Solver parameters: $(\epsilon,\, \delta)$
    \State \textbf{Output:} A $\delta$-relaxed solution, or NULL if $\mathcal{F}_\delta$ is empty

    \State $V \gets \{b\}$
    \State $\hat{z} \gets$ NULL
    \State $\hat{f} \gets -\infty$
    \While{$V \neq \emptyset$}
        \State $\hat{x} \gets \argmax_{x\in V} f(x)$
        \State $y \gets \hat{x} - (\|\hat{x} - a\|_\infty + \delta)\,\unitvector$
        \State $z \gets \pi_{\mathcal{G}_\delta}(\hat{x}; y)$
        \State $c \gets \min(z + \delta\,\unitvector,\, b)$
        \If{$f(c) > \hat{f}$ and $c \in \mathcal{H} \cap [a,b]$}
            \State $\hat{z} \gets c$
            \State $\hat{f} \gets f(c)$
        \EndIf
        \State $V \gets$ Update as \eqref{eq:vertex_update} with $W^t$ given by \eqref{eq:refined_vertices2}
    \EndWhile
    \Return $\hat{z}$
\end{algorithmic}
\end{algorithm}

\subsubsection{Convergence and complexity}\label{sssec:convergence}
Unlike the standard POA algorithm, the relaxed algorithm with $\delta > 0$ always terminates.
Furthermore, unlike existing robust monotonic optimisation methods \cite{sit_algorithm}, this guarantee does not rely on continuity assumptions and is accompanied by an explicit iteration bound.
We first state the properties of the relaxed algorithm on which the analysis rests: that it is correct whenever it terminates, and that it refines its maximal vertex at every iteration but the last.
\begin{lemma}~\label{lem:poa_properties}
    Let $t \in \mathbb{N}$ be an iteration of Algorithm~\ref{alg:poa_v2}.
    \begin{enumerate}[label=(\roman*)]
        \item If the algorithm terminates at iteration $t$, it returns a $\delta$-relaxed $\epsilon$-optimal solution, or NULL, in which case $\mathcal{F}_\delta = \emptyset$.
        \item Otherwise the maximal vertex lies $\delta$ above its projection, $\hat{x}^t \in I^t_\delta$, and so is refined.
    \end{enumerate}
\end{lemma}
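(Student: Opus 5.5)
The plan is to prove both parts together from one invariant, maintained by induction on $t$:
$$P(V^t) \supset \mathcal{F}^t_\delta .$$
For $t=0$ it holds because $V^0=\{b\}$ and $\mathcal{F}_\delta \subset [a,b] \subset P(b)$.

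For the inductive step I would check three things.
\begin{itemize}
\item The anchor $y^t = \hat{x}^t - (\|\hat{x}^t-a\|_\infty+\delta)\,\unitvector$ lies in $\mathcal{G}_\delta$, since $y^t+\delta\,\unitvector \le a \in \mathcal{G}$. It also lies strictly below $\hat{x}^t$, since $\delta>0$.
\item $\mathcal{G}_\delta$ is closed and normal. So Proposition~\ref{prop:cone_separate} applies with $\mathcal{G}_\delta$ in place of $\mathcal{G}$ whenever $\hat{x}^t \notin \interior\,\mathcal{G}_\delta$, and then the removed cone $(z^t,\infty)$ misses $\mathcal{G}_\delta$.
\item Partial refinement only removes part of that cone, because vertices in $F^t_\delta$ are kept intact. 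The prunings by $\mathcal{H}\cap[a,b]$ and by $f(s)<\hat f^{t+1}+\epsilon$ discard no point of $\mathcal{F}^{t+1}_\delta$, since $\mathcal{H}^c$ is normal, $f$ is increasing, and $\hat f^{t+1}\ge \hat f^t$.
\end{itemize}
Together these give $P(V^{t+1})\supset \mathcal{F}^{t+1}_\delta$.

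The side condition $\hat{x}^t\notin\interior\,\mathcal{G}_\delta$ needs its own argument. For vertices created by refinement it follows as in Section~\ref{ssec:poly_refinement}, since every new vertex lies in $[z^\tau,\infty)$. The initial vertex $b$, however, might lie in $\interior\,\mathcal{G}_\delta$. In that case $r=1$ and $z^t=\hat{x}^t$, which falls into the terminating case handled in part (ii) below, so nothing is removed illegitimately.

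Part (i) then follows quickly. If $V^t=\emptyset$, the invariant gives $\mathcal{F}^t_\delta=\emptyset$. If an incumbent exists, it is some $c^\tau\in\mathcal{F}$, and every $x\in\mathcal{F}_\delta$ satisfies $f(x)<\hat f^t+\epsilon$; this is exactly $\delta$-relaxed $\epsilon$-optimality. If no incumbent exists, then $\hat f^t=-\infty$, so $\mathcal{F}^t_\delta=\mathcal{F}_\delta$ is empty, and returning NULL is correct.

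For part (ii), the key point is that the balanced anchor makes the gap $\hat{x}^t - z^t$ a multiple of $\unitvector$. Indeed,
$$\hat{x}^t - z^t = (1-r)\bigl(\|\hat{x}^t-a\|_\infty+\delta\bigr)\,\unitvector .$$
So either $\hat{x}^t > z^t+\delta\,\unitvector$ in every coordinate, which means $\hat{x}^t\in I^t_\delta$, or $\hat{x}^t \le z^t+\delta\,\unitvector$ in every coordinate. I would show the second alternative forces $V^{t+1}=\emptyset$, so the algorithm stops.

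Suppose $\hat{x}^t\le z^t+\delta\,\unitvector$. Since also $\hat{x}^t\le b$, the shifted candidate satisfies $c^t=\min(z^t+\delta\,\unitvector,b)\ge \hat{x}^t$. The inductive invariant gives $\hat{x}^t\in\mathcal{H}\cap[a,b]$, so $c^t\in\mathcal{H}$ by co-normality and $c^t\in[a,b]$. Moreover $c^t\le z^t+\delta\,\unitvector\in\mathcal{G}$, so $c^t\in\mathcal{G}$ by normality. Hence $c^t\in\mathcal{F}$.

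Because $\hat{x}^t$ survived the previous pruning, $f(c^t)\ge f(\hat{x}^t)\ge \hat f^t+\epsilon>\hat f^t$, so the incumbent is updated and $\hat f^{t+1}\ge f(\hat{x}^t)$. Every surviving vertex $s$ then satisfies $f(s)\le f(\hat{x}^t)<\hat f^{t+1}+\epsilon$, using maximality of $\hat{x}^t$ for carried-over vertices and monotonicity for refined children. So the update \eqref{eq:vertex_update} empties $V$. Consequently, whenever the algorithm does not terminate, it is the first alternative that holds and $\hat{x}^t$ is refined.

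The main obstacle is this case analysis in (ii), specifically making precise what ``does not terminate at iteration $t$'' means; I would read it as $V^{t+1}\neq\emptyset$. The balanced anchor is essential here, because without the cube structure a partial coordinatewise gap would give neither alternative. A secondary point of care is the non-interior condition for $b$ when applying Proposition~\ref{prop:cone_separate}; I would resolve it via the $r=1$ case as described above.
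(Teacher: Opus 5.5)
Your proof is correct and follows the paper's argument: part (i) comes from the invariant $P(V^t)\supset\mathcal{F}^t_\delta$, and part (ii) comes from the cube structure $\hat{x}^t - z^t = \lambda\,\unitvector$ together with the observation that a short projection makes the shifted candidate dominate $\hat{x}^t$ and so empties $V^{t+1}$. The differences are minor improvements: you split directly on $\lambda\le\delta$ versus $\lambda>\delta$ rather than on whether $\hat{x}^t\in\mathcal{G}$, so part (ii) uses only $z^t\in\mathcal{G}_\delta$ and not the maximality of $r$; and you prove the invariant explicitly, including the non-interior condition at $b$, where the paper takes it from the surrounding text.
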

\begin{proof}
    (i) If the returned solution $\hat{z}$ is NULL, then no feasible candidate was found and $\hat{f}^t = -\infty$, so
    $$\mathcal{F}_\delta = \mathcal{F}^t_\delta \subset P(V^t) = \emptyset,$$
    and the relaxed problem is infeasible.
    Otherwise $\hat{z} \in \mathcal{F}$, and
    \begin{align*}
        V^t = \emptyset &\implies \mathcal{F}^t_\delta = \emptyset \\
        &\implies f(\hat{z}^t) + \epsilon > f(x), \ \forall x\in\mathcal{F}_\delta,
    \end{align*}
    hence $\hat{z}$ is a $\delta$-relaxed $\epsilon$-optimal solution.

    (ii) We show that the algorithm terminates at any iteration whose maximal vertex is feasible, so that $\hat{x}^t \notin \mathcal{G}$ otherwise.
    In that case $\hat{x}^t - \delta\, \unitvector \in \mathcal{G}_\delta$, and since this point lies on the line joining $y^t$ and $\hat{x}^t$, the projection cannot fall below it:
    $$z^t \geq \hat{x}^t - \delta\,\unitvector.$$
    The candidate shift therefore recovers at least the vertex itself,
    $$c^t = \min(z^t + \delta\,\unitvector,\, b) \geq \min(\hat{x}^t,\, b) = \hat{x}^t,$$
    using $\hat{x}^t \in V^t \subset [a,b]$.
    This candidate is feasible: $c^t \leq z^t + \delta\, \unitvector \in \mathcal{G}$ gives $c^t \in \mathcal{G}$, $c^t \in [a,b]$ by construction, and $c^t \geq \hat{x}^t \in \mathcal{H}$ gives $c^t \in \mathcal{H}$ by co-normality, as every vertex lies in $\mathcal{H} \cap [a,b]$.
    This candidate is therefore no better than the incumbent solution, $\hat{f}^{t+1} \geq f(c^t)$.
    Since $\hat{x}^t$ maximises $f$ over $P(V^t)$, we have
    $$\hat{f}^{t+1} \geq f(c^t) \geq f(\hat{x}^t) \geq f(s), \quad \forall s \in P(V^t) \supset F^t_\delta \cup N^t,$$
    so every remaining vertex is discarded by the objective pruning in \eqref{eq:vertex_update},
    $$
    V^{t+1} =
    \{s \in F^t_\delta \cup N^t: f(s) \geq \hat{f}^{t+1} + \epsilon\} =
    \emptyset,
    $$
    and the algorithm terminates.

    At any other iteration the maximal vertex is therefore infeasible, so that $\hat{x}^t - \delta\, \unitvector \notin \mathcal{G}_\delta$.
    Its anchor is feasible, as $y^t + \delta\, \unitvector = \hat{x}^t - \|\hat{x}^t - a\|_\infty\, \unitvector \leq a$ gives $y^t \in \mathcal{G}_\delta$ by normality, while the projection is obtained by reducing the components of $\hat{x}^t$ equally,
    $$ \hat{x}^t - z^t = (1 - r_{\mathcal{G}_\delta}(\hat{x}^t; y^t))\, (\|\hat{x}^t - a\|_\infty + \delta)\, \unitvector. $$
    Every component is therefore reduced by the same amount
    $\lambda := (1 - r_{\mathcal{G}_\delta}(\hat{x}^t; y^t))\, (\|\hat{x}^t - a\|_\infty + \delta)$.
    The infeasible point $\hat{x}^t - \delta\, \unitvector$ cannot lie below $z^t = \hat{x}^t - \lambda\, \unitvector$, as Proposition~\ref{prop:cone_separate} would then place it in $\mathcal{G}_\delta$.
    Hence $\hat{x}^t - z^t = \lambda\, \unitvector > \delta\, \unitvector$, that is $\hat{x}^t \in I^t_\delta$.
\end{proof}

We next bound the coordinate sum lost along a path of the vertex tree $\mathcal{T}^t$ introduced in Section~\ref{ssec:vertex_tree}, and deduce from it a bound on the depth of the tree:
\begin{lemma}~\label{lem:vertex_paths}
    Let $\mathcal{T}^t$ be the vertex tree produced by any run of Algorithm~\ref{alg:poa_v2} up to some iteration $t \in \mathbb{N}$,
    and let $s^1, \hdots, s^k \in T^t$ be a path in this tree, so that $(s^l, s^{l+1}) \in E^t$ for all $l \in [k-1]$.
    Then
    \begin{enumerate}[label=(\roman*)]
        \item the path loses at least $\delta$ of coordinate sum per edge,
        $\sum\nolimits_i (s^1_i - s^k_i) \geq (k-1)\, \delta$;
        \item if $s^1$ is the root of the tree, the path is no longer than $D := \lfloor \delta^{-1}\, \|b-a\|_1 \rfloor + 1$ nodes, $k \leq D$.
    \end{enumerate}
\end{lemma}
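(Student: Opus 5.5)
The plan is to prove (i) by bounding the coordinate-sum loss of a single edge, then summing along the path. Part (ii) then follows from the bounding box $[a,b]$.

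For (i), fix an edge $(s, u) \in E^t$. By the definition of $E^t$, there is an iteration $\tau$ with $s \in V^\tau$ and $u \in \xi^\tau(s)$, so $u = \varrho_i(s, z^\tau)$ for some $i \in [n]$. Children are only generated from refined vertices. In Algorithm~\ref{alg:poa_v2}, \eqref{eq:refined_vertices2} refines only vertices $s \in I^\tau_\delta = V^\tau \cap (z^\tau + \delta\,\unitvector, \infty)$, so $s_j > z^\tau_j + \delta$ for every $j$. The child agrees with $s$ in every coordinate except the $i$-th, where it takes the value $z^\tau_i$. Hence
$$\textstyle\sum_j (s_j - u_j) = s_i - z^\tau_i > \delta.$$
Telescoping over the $k-1$ edges $(s^l, s^{l+1})$ gives $\sum_i (s^1_i - s^k_i) = \sum_{l} \sum_i (s^l_i - s^{l+1}_i) \geq (k-1)\delta$, and in fact the inequality is strict whenever $k \geq 2$.

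The one point needing care is that the tree edges are exactly these single-coordinate replacements of vertices in $I^\tau_\delta$. This is immediate from the definition of $\xi^\tau(s)$, once we note that in the relaxed algorithm $\xi^\tau$ is only applied to refined vertices. A vertex in $F^\tau_\delta$ is carried over as itself and gets no children.

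For (ii), take $s^1 = b$, the root. Every vertex in the tree lies in some $V^\tau \subset [a,b]$, as established by the induction in Section~\ref{ssec:poly_refinement}, since $N^\tau$ is intersected with $[a,b]$. So $s^k \geq a$ and $\sum_i (b_i - s^k_i) \leq \|b-a\|_1$. Combining with (i), $(k-1)\delta \leq \|b-a\|_1$, hence $k - 1 \leq \delta^{-1}\|b-a\|_1$. Since $k-1$ is an integer, $k-1 \leq \lfloor \delta^{-1}\|b-a\|_1 \rfloor$, which gives $k \leq D$.

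I do not expect a real obstacle here. The main thing to verify is that the strict inequality from $I^\tau_\delta$ survives into a per-edge bound of $\delta$. It does, and the floor handles the integrality cleanly.
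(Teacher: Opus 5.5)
Your proof is correct and follows the paper's argument essentially step for step. A single edge $(s,\varrho_i(s,z^\tau))$ with $s\in I^\tau_\delta$ loses $s_i - z^\tau_i > \delta$ of coordinate sum, telescoping gives (i), and $s^k\in[a,b]$ together with the integrality of $k$ gives (ii). You also state explicitly that only refined vertices acquire children, which the paper's definition of $E^t$ over all $s\in V^\tau$ leaves implicit; that remark is worth keeping.
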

\begin{proof}
    (i) The left-hand side telescopes, so it is sufficient to prove the case $k=2$.
    A single edge $(s^1, s^2) \in E^t$ arises from refining $s^1$ at some iteration $\tau \in [t]$, so that $s^2 = \varrho_j(s^1, z^\tau)$ for some $j \in [n]$, and the two vertices differ only in their $j$-th component:
    \begin{align*}
        \sum\nolimits_i\, (s^1_i - s^2_i) = s^1_j - z^\tau_j \geq \delta,
    \end{align*}
    where the inequality holds because \eqref{eq:refined_vertices2} refines only vertices satisfying $s^1 > z^\tau + \delta\, \unitvector$.

    (ii) This follows by taking $s^1 = b$, the initial vertex held at the root of the tree, and using $s^k \in [a,b]$:
    $$ (k-1)\, \delta \leq \sum\nolimits_i (b_i - s^k_i) \leq \|b - a\|_1. $$
    Rearranging gives $k \leq \delta^{-1}\, \|b-a\|_1 + 1$, and $k \leq D$ as $k$ is an integer.
\end{proof}
We now state the main result:
\begin{theorem} \label{thm:POA_new}
    Algorithm~\ref{alg:poa_v2} terminates in no more than
    $$ \frac{n^{D} - 1}{n - 1} = \frac{n^{\lfloor \delta^{-1}\, \|b - a\|_1 \rfloor + 1} - 1}{n - 1} $$
    iterations,
    returning a $\delta$-relaxed $\epsilon$-optimal solution, or proving that $\mathcal{F}_\delta = \emptyset$.
\end{theorem}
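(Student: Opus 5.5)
The plan is to count iterations by counting the internal (refined) nodes of the vertex tree $\mathcal{T}^t$, and to bound that count using the depth bound of Lemma~\ref{lem:vertex_paths}. The correctness part of the statement is already covered by Lemma~\ref{lem:poa_properties}(i) once termination is established, so the whole effort goes into the iteration bound.

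First, map each non-final iteration to a distinct internal node. By Lemma~\ref{lem:poa_properties}(ii), at every iteration $\tau$ before the last, the maximal vertex $\hat{x}^\tau$ lies in $I^\tau_\delta$ and is refined. After refinement it leaves the active vertex set: it is in $I^\tau_\delta$, so it is not carried over in $F^\tau_\delta$. Every element of $W^\tau$ also lies in $[z^\tau,\infty)$ with at least one coordinate strictly reduced below $\hat{x}^\tau$, so it differs from $\hat{x}^\tau$. Since children always have strictly smaller coordinate sum (Lemma~\ref{lem:vertex_paths}(i)), $\hat{x}^\tau$ can never reappear as a vertex later. The map $\tau \mapsto \hat{x}^\tau$ is therefore injective into the set of nodes of the tree that get refined. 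If desired, I will treat the tree as a rooted tree with root $b$, so that every node is reached by a path from $b$.

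Second, bound the number of such nodes. Each refined node has at most $n$ children, one per coordinate $i\in[n]$ via $\varrho_i$. Lemma~\ref{lem:vertex_paths}(ii) says every root path has at most $D$ nodes, so all nodes sit at depths $0,\dots,D-1$. The tree is therefore contained in a complete $n$-ary tree of depth $D-1$, which has $\sum_{d=0}^{D-1} n^d = (n^D-1)/(n-1)$ nodes. This bounds the number of refined nodes and hence the number of non-final iterations. The final iteration needs separate handling. Refined nodes at depth $D-1$ would produce children at depth $D$, contradicting Lemma~\ref{lem:vertex_paths}(ii) unless those children are all pruned. Alternatively, the count of iterations including the last can be bounded directly by the number of distinct maximal vertices ever selected, since the final iteration also selects a vertex of the tree that is never selected again. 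That gives at most $(n^D-1)/(n-1)$ iterations in total. I will phrase it this way: every iteration, including the last, selects a node $\hat{x}^t\in T^t$, and no node is selected twice. This holds because a selected node is either refined (and removed) or the algorithm stops.

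The main obstacle is ensuring distinctness rigorously, namely that a node cannot be selected twice and that the tree structure is genuinely a tree. Two separate parents might generate the same vertex, or a vertex might persist. The first thing I would try is to observe that $E^t$ only records children of the refined vertex structure. Even if the "tree" were a DAG, the count of distinct vertices reachable within $D$ levels with branching at most $n$ is still bounded by $(n^D-1)/(n-1)$, because the count of root paths bounds the count of distinct nodes. This sidesteps any uniqueness issue.
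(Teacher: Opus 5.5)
Your proposal follows the paper's proof essentially step for step: correctness comes from Lemma~\ref{lem:poa_properties}(i), every iteration (including the last) is assigned injectively to a tree node via Lemma~\ref{lem:poa_properties}(ii), and the level-by-level $n$-ary count uses the depth bound $D$ from Lemma~\ref{lem:vertex_paths}(ii). One justification needs repair: decreasing coordinate sums only stop $\hat{x}^\tau$ from reappearing among its own descendants, not from being regenerated in another branch. The reason it never returns is that every vertex present after iteration $\tau$ has some coordinate $u_i \le z^\tau_i + \delta < \hat{x}^\tau_i$ (by definition of $F^\tau_\delta$ for carried-over vertices, and because $u_i = z^\tau_i$ for new ones), and all later vertices inherit this because they are componentwise smaller than their ancestors.
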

\begin{proof}
    By Lemma~\ref{lem:poa_properties}(i), the returned solution is correct whenever the algorithm terminates, so it is sufficient to bound the number of iterations.

    No node of the vertex tree serves as the maximal vertex at two different iterations.
    At every non-final iteration $t$ the maximal vertex is refined, $\hat{x}^t \in I^t_\delta$ by Lemma~\ref{lem:poa_properties}(ii), and refined vertices never return to the active set, $I^t_\delta \cap V^\tau = \emptyset$ for all $\tau > t$, while the final iteration draws its maximal vertex from what remains of that set.
    Each iteration therefore consumes a distinct node, and the number of iterations is bounded by the number of nodes the tree can hold.
    By Lemma~\ref{lem:vertex_paths}(ii) the tree is at most $D$ nodes deep, and since each node has at most $n$ children, its $d$-th level holds at most $n^{d-1}$ nodes.
    Summing over the levels leaves at most
    $$ \sum\nolimits_{d = 1}^{D} n^{d - 1} = \frac{n^D - 1}{n - 1} $$
    nodes available to act as maximal vertices, which is the stated bound.
\end{proof}

The bound we obtain is very conservative, as it counts every vertex the tree could conceivably contain while the algorithm typically expands only a small fraction of them.
Had we retained the full refinement \eqref{eq:refined_vertices} in Algorithm~\ref{alg:poa_v2}, keeping the balanced anchors and the $\delta$-relaxation but refining every infeasible vertex, we could instead have argued by volume.
Each iteration then removes the whole region $(z^t, \infty)$ from the polyblock, so no later maximal vertex can lie in it.
By Lemma~\ref{lem:poa_properties}(ii) this region contains the box $(\hat{x}^t - \delta\, \unitvector, \hat{x}^t]$ at every non-final iteration.
These boxes are therefore pairwise disjoint, and as each has volume $\delta^n$ and lies in $[a - \delta\, \unitvector, b]$, at most $\lfloor \delta^{-n} \prod_i(b_i-a_i+\delta) \rfloor$ such iterations are possible, plus the last.
Partial refinement gives up this argument, as it leaves part of the infeasible region in place, in return for the guaranteed reduction in coordinate sums on which the bound above rests.
In practice we find that the number of iterations required is much smaller than either bound, particularly when the rectangular set $[a,b]$ is well-scaled.

\subsection{Vectorising POA}\label{ssec:vectorising}

Thus far, each iteration of POA computes only a single projection and explores the corresponding infeasible region.
While this strategy is sufficient for convergence, it does not fully exploit the capabilities of modern parallel computing hardware.
We therefore introduce a vectorised variant of POA which processes multiple candidate solutions concurrently.
This mirrors the node-level parallelism employed by parallel branch-and-bound methods \cite{parallel_bnb_survey}, and in particular by modern MILP solvers, which expand several open nodes of the search tree concurrently in order to exploit multi-threaded hardware \cite{parallel_milp_solvers}.
An important distinction is that branching in MILP partitions a node into disjoint subproblems, so concurrently expanded nodes may be refined independently and merged without interaction.
In POA, by contrast, the infeasible cones generated by different projections may overlap, and the refinement must therefore resolve vertices lying in more than one cone.
We address this below by associating each infeasible vertex with a single projection.

Instead of selecting only the maximal vertex $\hat{x}^t$ at iteration $t$, we select a set of $k$ vertices $\{x^{t,j}\}_{j\in[k]} \subset V^t$ according to an arbitrary selection policy, subject only to the requirement that the maximal vertex is always selected, $\hat{x}^t \in \{x^{t,j}\}_{j\in[k]}$, which ensures the convergence guarantee of Theorem~\ref{thm:POA_new} is preserved.

Each selected vertex is then processed independently, producing its own balanced anchor, projection, and shifted candidate:
\begin{align*}
    y^{t,j} & := x^{t,j} - (\|x^{t,j}-a\|_\infty + \delta)\, \unitvector, \\
    z^{t,j} &:= \pi_{\mathcal{G}_\delta}(x^{t,j}; y^{t,j}), \\
    c^{t,j} &:= \min(z^{t,j} + \delta\,\unitvector,\, b).
\end{align*}
These are the per-vertex analogues of the quantities used in Section~\ref{ssec:relaxed_opt}, and the feasibility argument given there applies unchanged to each $c^{t,j}$.

In principle, the refined search space should be obtained by removing the union of all infeasible cones
$$R^t := \bigcup\nolimits_{j \in [k]}(z^{t,j}, \infty)$$
from the remaining search space.
Computing the vertex representation of $P(V^t) \backslash R^t$, however, appears substantially more involved than in the single-projection case.
Instead, we adopt a simpler refinement strategy in which each vertex is associated with at most one projection.

Following the partial refinement rule of Section~\ref{ssec:relaxed_opt}, a vertex is refined only when it lies at least $\delta$ above one of the projections,
$$I^t_\delta := V^t \cap \bigcup\nolimits_{j\in[k]} (z^{t,j} + \delta\, \unitvector, \infty),$$
with the remaining vertices $F^t_\delta := V^t \backslash I^t_\delta$ carried over unchanged as before.
When more than one cone is eligible to refine $s \in I^t_\delta$, we associate it with the lowest-indexed such projection,
$$ j_s := \min\{j \in [k]: s > z^{t,j} + \delta\, \unitvector\}.$$
After removing redundant vertices, this yields the refined vertex set
\begin{equation}\label{eq:vector_refinement}
    W^t = \{\varrho_i(s, z^{t,j_s}):
    \varrho_i(s, z^{t,j_s}) \notin P( \bar{I}^{t,j_s} \backslash \{s\} ),\,
    s\in I^t_\delta,\, i\in [n] \},
\end{equation}
where
$$\bar{I}^{t,j} := V^t \cap [z^{t,j}, \infty), \quad \forall j \in [k].$$
This replaces \eqref{eq:refined_vertices2} when computing $V^{t+1}$.

The redundancy test is that of \eqref{eq:refined_vertices}, applied to each vertex under the projection it is assigned, and so the refined polyblock still contains the remaining search space,
$$P(F^t_\delta \cup W^t) \supset P(V^t) \backslash R^t \supset \mathcal{F}^{t+1}_\delta.$$

Algorithm~\ref{alg:poa_v3} is the result of combining the three algorithmic improvements proposed: balanced anchors (Section~\ref{ssec:balanced_anchors}), $\delta$-relaxation (Section~\ref{ssec:relaxed_opt}), and vectorisation.
It differs from Algorithm~\ref{alg:poa_v2} only in the body of its main loop, which is all we display.
Both lemmas underlying Theorem~\ref{thm:POA_new} remain valid under this refinement.
Lemma~\ref{lem:poa_properties} applies unchanged, as the selection rule always includes the maximal vertex and its anchor, projection, and shifted candidate are computed exactly as in Algorithm~\ref{alg:poa_v2}, so every non-final iteration still refines $\hat{x}^t$.
Lemma~\ref{lem:vertex_paths} holds equally for the resulting vertex tree, as \eqref{eq:vector_refinement} follows the same partial refinement strategy.
The counting argument of Theorem~\ref{thm:POA_new} uses nothing else, so its iteration bound also holds for Algorithm~\ref{alg:poa_v3}.

Compared with Algorithm~\ref{alg:poa_v2}, the vectorised algorithm exposes substantially more parallelism.
The feasibility-query, projection, and refinement operations for each selected vertex may all be executed independently.
Furthermore, processing multiple projections typically causes more vertices to be refined at each iteration, increasing the amount of parallel work available during the vertex-update stage.
In this work, we exploit this parallelism using CPU multi-threading.
However, in higher-dimensional problems the active vertex set often becomes extremely large, making the proposed vectorisation strategy a promising basis for future GPU implementations capable of processing much larger batches of vertices simultaneously.

\begin{algorithm}
    \caption{The POA algorithm with all proposed improvements. Only the body of the main loop is shown; the surrounding initialisation and termination are as in Algorithm~\ref{alg:poa_v2}, with the additional solver parameter $k$ setting the number of vertices refined per iteration.}
    \label{alg:poa_v3}
    \small\begin{algorithmic}[0]
    \State $\{x^1,\hdots, x^k\} \gets \text{any subset of $V$ containing $\hat{x}$} $
    \For{$j \in [k]$}
        \State $y^j \gets x^j - (\|x^j - a\|_\infty + \delta)\,\unitvector$
        \State $z^j \gets \pi_{\mathcal{G}_\delta}(x^j; y^j)$
        \State $c^j \gets \min(z^j + \delta\,\unitvector,\, b)$
        \If{$f(c^j) > \hat{f}$ and $c^j \in \mathcal{H} \cap [a,b]$}
            \State $\hat{z} \gets c^j$
            \State $\hat{f} \gets f(c^j)$
        \EndIf
    \EndFor
    \State $V \gets$ Update as \eqref{eq:vertex_update} with $W^t$ given by \eqref{eq:vector_refinement}
\end{algorithmic}
\end{algorithm}

\section{Implementing POA algorithms}\label{sec:implementation}
In this section, we develop an efficient implementation of POA applicable to both Algorithm~\ref{alg:poa} and Algorithm~\ref{alg:poa_v3}.
The key idea is to replace the direct storage of polyblock vertices with a tree-based representation of the polyblock construction process.
We first identify the main computational bottlenecks in standard POA implementations, before introducing the proposed data structure and demonstrating how it allows for more efficient implementations of POA's subroutines.

\subsection{Computational bottlenecks in POA}\label{ssec:poa_bottlenecks}

The primary computational challenge in POA arises from the rapid growth of the polyblock vertex set $V^t$ as the problem dimension increases.
During each iteration, POA expands the infeasible vertices $I^t$ by generating up to $n$ new child vertices for each one.
Consequently, the number of vertices can increase exponentially with the number of iterations.
This growth is considerably more severe than in many traditional branch-and-bound algorithms.
For example, branch-and-cut methods for mixed-integer programming typically generate only two child nodes per iteration, whereas a single POA expansion may generate up to $n\cdot|I^t|$ new vertices.

As a result, the dominant computational costs in POA are associated with operations that require searching, updating, or filtering the vertex set $V^t$.
With reference to Section~\ref{ssec:poly_refinement},
each iteration must find the maximal vertex $\hat{x}^t = \argmax_{x \in V^t} f(x)$, answer the range query $\bar{I}^t = \{s \in V^t : s \geq z^t\}$, add the newly generated vertices $N^t$, and remove the vertices discarded by \eqref{eq:vertex_update}: those refined during the iteration, and those left with $f(s) < \hat{f}^{t+1} + \epsilon$.
An efficient POA implementation should store $V^t$ in a data structure which supports all four operations as efficiently as possible.

A straightforward implementation stores the vertices $V^t$ and their corresponding objective values $f(V^t)$ using two dynamic arrays, such as Python lists or C++ vectors.
Addition is then cheap, as new vertices are appended in amortised $O(1)$ time, but the search, the query, and the removal each require a scan of the full arrays, at a cost of $O(|V^t|)$.
Although not explicitly documented, previous implementations of POA (e.g., \cite{monotone_opt_tuy, montone_opt_MISO}) likely employ a similar approach.
We refer to this as the \emph{naive} POA implementation.

While simple and effective for small problems, the naive implementation does not exploit the hierarchical structure inherent in POA.
Every vertex generated during POA is derived from a previous vertex by modifying a single component, meaning that the relationships between vertices contain valuable information which can be used to accelerate these operations.
We therefore propose an alternative representation based on the underlying vertex tree.

\subsection{A tree-based polyblock representation}
Rather than explicitly storing only the current leaf vertices $V^t$, we store the complete tree $\mathcal{T}^t$ generated during the POA search process. We show that this representation provides efficient implementations of all four operations described above while avoiding repeated searches over the complete vertex set.

Maintaining the full tree may appear to increase memory requirements.
However, the structure of POA trees allows a compact representation in which each node stores only the information required to reconstruct its corresponding vertex.
Specifically, we represent $\mathcal{T}^t$ using six equal-length dynamic arrays, with each tree node represented by an index $i$ into them:
\begin{center}
    \addvspace{10pt}
    \small\begin{tabular}{ll} \toprule
        Array & Entry at node $i$ \\ \midrule
        \texttt{start}  & Index of the first child of $i$ \\
        \texttt{end}    & Index immediately after the last child of $i$ \\
        \texttt{parent} & Index of the parent of $i$ \\
        \texttt{comp}   & Vertex component modified at $i$ \\
        \texttt{value}  & Value assigned to the modified component \\
        \texttt{obj}    & Maximum objective value among the descendant leaves of $i$ \\
        \bottomrule
    \end{tabular}
    \addvspace{10pt}
\end{center}

Since each new vertex $\varrho_i(s, z^t) \in N^t$ differs from its parent $s \in V^t$ by only a single component, the complete vertex representation does not need to be stored at every node.
Instead, the \texttt{comp} and \texttt{value} arrays store the sequence of component modifications required to reconstruct each vertex.

Furthermore, because POA only expands leaf nodes and all children of an expanded node are generated within the same iteration, child nodes can be stored using contiguous indices.
Specifically, for a non-leaf node with index $i \in \mathbb{N}_0$, its children occupy the range $[\texttt{start}[i], \texttt{end}[i]).$
This compact representation simplifies tree traversal and enables efficient iteration over child nodes.

We now detail how this tree representation can be used to implement each of the four operations on $V^t$.

\subsubsection*{Finding the maximal vertex}
The \texttt{obj} values stored at each tree node provide an upper bound on the objective values of all descendant leaves.
In particular, the \texttt{obj} value of a node is equal to the maximum objective value among the leaves in its subtree.
Therefore, the tree can be traversed in a manner analogous to a tournament tree, where each internal node stores the best candidate among its children.

The maximal vertex $\hat{x}^t$ can consequently be found by traversing the tree from the root to the leaf with the largest objective value.
Starting from the root node, Algorithm~\ref{alg:max_vertex} maintains the current vertex representation $v$, initially set to $b$.
At each node, the child with the largest \texttt{obj} value is selected, and the corresponding component update is applied to $v$.
Since each child stores only the component that differs from its parent, this process reconstructs the vertex representation of the optimal leaf without explicitly storing all vertices.
\begin{algorithm}
    \caption{A tree-based algorithm for computing the maximal vertex}
    \label{alg:max_vertex}
    \small\begin{algorithmic}[0]
        \State \textbf{Inputs:} Vertex tree: $(\texttt{start},\, \texttt{end},\,\texttt{value},\, \texttt{comp},\, \texttt{obj})$, Problem data: $(b)$
        \State \textbf{Output:}
        Maximal vertex $\hat{x}$, Index of maximal vertex $i$

        \State $\hat{x} \gets b$
        \State $i \gets 0$
        \While{$\texttt{start}[i] < \texttt{end}[i]$}
            \State $i \gets \argmax \{ \texttt{obj}[j] : \texttt{start}[i] \leq j < \texttt{end}[i] \} $
            \State $\hat{x}[\texttt{comp}[i]] \gets \texttt{value}[i]$
        \EndWhile
        \Return $\hat{x}, \, i$
    \end{algorithmic}
\end{algorithm}

\subsubsection*{Range queries}
The hierarchical structure of $\mathcal{T}^t$ also enables efficient range queries over the polyblock.
For any node $s \in T^t$, the rectangular region represented by the leaves of its subtree is contained within $P(s)$.
Consequently, the tree provides a hierarchy of nested regions, similar in spirit to spatial indexing structures such as B-trees \cite{B-trees} and KD-trees \cite{kd_trees_tutorial}, allowing large portions of the search space to be discarded without examining individual vertices.

To identify the set of vertices $\bar{I}^t = \{s \geq z^t : s \in V^t\}$,
we recursively traverse the tree from the root and query whether each child node can contain vertices satisfying the range condition.
Since a child node differs from its parent in only one component, this test can be performed efficiently as $\texttt{value}[i] \geq z^t_{\texttt{comp}[i]},$ where $i$ is the corresponding node index.
Furthermore, the stored \texttt{obj} values allow subtrees with insufficient objective values to be pruned immediately, avoiding unnecessary traversal.

As in Algorithm~\ref{alg:max_vertex}, the full vertex representation of each leaf can be reconstructed during traversal by maintaining the sequence of component updates.
Algorithm~\ref{alg:range_q} gives the complete procedure.

The query returns $\bar{I}^t$, which is used directly to discard redundant vertices in \eqref{eq:refined_vertices} and \eqref{eq:refined_vertices2}.
The vertices to be refined are nested within it,
$$I^t_\delta \subset I^t \subset \bar{I}^t,$$
so each is recovered by a single pass over the queried vertices, comparing them against $z^t$ or $z^t + \delta\, \unitvector$ as required.
Filtering linearly in this way is inexpensive because $\bar{I}^t$ is typically far smaller than the active vertex set $V^t$, which the query never enumerates.
The vectorised algorithm of Section~\ref{ssec:vectorising} concurrently performs one such query per projection, computing $I^t_\delta$ and the assignment $j_s$ from the $k$ results.

\begin{algorithm}
    \caption{A tree-based algorithm for performing range queries}
    \label{alg:range_q}
    \small\begin{algorithmic}[0]
            \State \textbf{Inputs:} Vertex tree: $(\texttt{start},\, \texttt{end},\, \texttt{value},\, \texttt{comp},\, \texttt{obj})$,
            Problem data: $(b,\, z,\, \hat{f},\, \epsilon)$
            \State \textbf{Output:}
            Queried vertices $\bar{\texttt{I}}$,
            Indices of queried vertices $\bar{\texttt{J}}$

        \State $ \texttt{S} \gets [b], \ \texttt{K} \gets [0] $ \Comment{Vertices and indices of nodes awaiting traversal}
        \State $\bar{\texttt{I}} \gets [],\ \phantom{b} \bar{\texttt{J}} \gets []$ \Comment{Range query results}
        \While{$\texttt{S}$ is not empty}
            \State $v \gets \texttt{S} \texttt{.pop()}, \ i \gets \texttt{K} \texttt{.pop()}$
            \If{\texttt{start}[i] = \texttt{end}[i]} \Comment{Leaves are returned, internal nodes descended}
                \State $\bar{\texttt{I}} \texttt{.append(v)}, \ \bar{\texttt{J}} \texttt{.append(i)}$
            \Else
            \ForAll{$j \in \{\texttt{start}[i],\hdots, \texttt{end}[i] - 1\}$}
                \If{$\texttt{value}[j] \geq z_{\texttt{comp}[j]}$ and $\texttt{obj}[j] \geq \hat{f} + \epsilon $}
                    \State $w \gets v$
                    \State $w[\texttt{comp}[j]] \gets \texttt{value}[j]$
                    \State $\texttt{S.append(w)}, \ \texttt{K.append(j)}$
                \EndIf
            \EndFor
            \EndIf
        \EndWhile
        \Return $\bar{\texttt{I}}, \bar{\texttt{J}}$
    \end{algorithmic}
\end{algorithm}

\subsubsection*{Adding vertices}

When new leaf vertices $N^t$ are generated, each of the six arrays defining $\mathcal{T}^t$ is extended with the corresponding attributes of the new nodes, ordered so that siblings occupy contiguous ranges in memory and the traversals described above remain efficient.
Newly created leaves are assigned empty child ranges $\texttt{start}[i] = \texttt{end}[i]$, while the parent nodes expanded during the current iteration are updated to reference their newly generated children.

For explored nodes that do not receive new children, the values of $\texttt{start}$ and $\texttt{end}$ are set to indicate an empty child range, and their \texttt{obj} values are set to $-\infty$.
This prevents them from being selected in the maximal vertex and range searches.
Finally, the new \texttt{obj} values are propagated upwards through the tree, updating each ancestor to match the maximum objective value among its children.

\subsubsection*{Removing vertices}
Two kinds of vertices leave the active set at each iteration: those refined during it, and those left with an objective value below $\hat{f} + \epsilon$.
Neither needs to be removed from the tree structure explicitly.
A refined vertex ceases to be a leaf as soon as its children are attached, so the searches above no longer reach it.
When every one of its children is discarded, it is instead given $\texttt{obj} = -\infty$.
This is the same marking carried by a pruned leaf, whose own objective value has fallen below $\hat{f} + \epsilon$.
Both are therefore excluded by the range query of Algorithm~\ref{alg:range_q}, whose \texttt{obj} test skips them and, through the bounds held at internal nodes, any subtree consisting only of such nodes.
Removal therefore costs nothing beyond the upward propagation of \texttt{obj} already performed when adding vertices.

However, retaining these nodes indefinitely can lead to unnecessary memory consumption as the tree grows.
To address this issue, we periodically rebuild the tree, discarding every node whose \texttt{obj} value has fallen below $\hat{f} + \epsilon$, and with it every refined vertex left without active descendants.
The resulting tree compaction preserves the correctness of POA while limiting long-term memory growth.

\subsection{Complexity of tree-based POA}\label{ssec:tree_complexity}
Although the proposed data structure substantially improves the practical performance of POA, analysing its computational complexity is challenging because the structure of the vertex tree $\mathcal{T}^t$ depends on the underlying monotonic problem.
In particular, tree search procedures are not guaranteed to run in $O(\log |V^t|)$ time, since $\mathcal{T}^t$ is generally unbalanced.
Furthermore, long chains of single-child nodes prevent a simple complexity bound in terms of the number of active vertices $|V^t|$.

For the relaxed algorithms of Section~\ref{ssec:relaxed_opt}, however, Lemma~\ref{lem:vertex_paths}(ii) bounds the depth of the tree by $D = \lfloor \delta^{-1}\|b-a\|_1 \rfloor + 1$, independently of $|V^t|$ and indeed of $t$.
Each individual traversal of the tree therefore has a worst-case cost which is likewise independent of the size of the vertex set.
Algorithm~\ref{alg:max_vertex} descends a single path from the root, examining at most $n$ children at each of its at most $D$ nodes, and so runs in $O(nD)$ time; propagating updated \texttt{obj} values from a refined vertex back to the root traverses such a path in the opposite direction, at the same cost.
The array-based implementation instead scans the entire vertex set for both operations, at a cost of $O(|V^t|)$, which grows rapidly with the problem dimension as discussed in Section~\ref{ssec:poa_bottlenecks}.
Range queries admit no comparable guarantee, as any implementation must at least enumerate the vertices it returns; the \texttt{obj} bounds nonetheless allow whole subtrees containing no active vertices to be skipped.
Adding the new vertices combines the two costs: appending the $|N^t|$ new leaves takes $O(|N^t|)$ time, while every refined vertex has its \texttt{obj} value changed, so the upward pass is repeated once for each element of $I^t_\delta$, at a total cost of $O(|N^t| + |I^t_\delta|\, nD)$.
Table~\ref{tab:operations} collects the resulting costs of the four operations of Section~\ref{ssec:poa_bottlenecks}.

\begin{table}[h]
    \centering
    \caption{Worst-case cost per iteration of each operation on the vertex set, under the two representations. Here $D$ bounds the depth of the vertex tree, $\bar{I}^t$ denotes the result of the range query, and $I^t_\delta$ the set of vertices refined during the iteration.}
    \label{tab:operations}
    \small\begin{tabular}{lll} \toprule
        Operation & Array-based & Tree-based \\ \midrule
        Finding the maximal vertex $\hat{x}^t$  & $O(|V^t|)$   & $O(nD)$ \\
        Answering the range query $\bar{I}^t$   & $O(|V^t|)$   & $\Omega(|\bar{I}^t|)$, output-sensitive \\
        Adding the new vertices $N^t$           & $O(|N^t|)$   & $O(|N^t| + |I^t_\delta|\, nD)$ \\
        Removing the pruned vertices            & $O(|V^t|)$   & Deferred to periodic rebuilds \\
        \bottomrule
    \end{tabular}
\end{table}

This worst case is pessimistic, as it presumes a path in which every refinement removes the least possible coordinate sum.
In practice the vertex trees generated by POA are also much more balanced than the worst case would suggest. In particular, descendant leaves of a given node tend to have similar depths, for which Lemma~\ref{lem:vertex_paths} again provides some intuition.

Consider a non-leaf node $v \in T^t$ and two paths descending from $v$ to leaves of its subtree, one substantially longer than the other.
The lemma implies that the leaf at the end of the longer path must have a significantly smaller coordinate sum.
Since $f$ is increasing, this also tends to yield a smaller objective value, reducing the likelihood that this leaf is maximal.
Moreover, the smaller coordinate values make the leaf less likely to lie within one of the infeasible cones $(z^t,\infty)$.
Consequently, shorter paths are more likely to be extended than longer ones, narrowing the disparity in path lengths over time.
The refinement process therefore exhibits a natural tendency to rebalance the tree.

A rigorous average-case analysis of this self-balancing behaviour remains an interesting direction for future work. In Section~\ref{sec:results}, we instead demonstrate empirically that the proposed representation provides substantial speed-ups over the naive array-based implementation.

\section{\href{https://github.com/RashwanA/polyblocks}{\texttt{polyblocks}}: an open-source implementation of POA}\label{sec:software}
Realising the preceding sections as software requires a solver in which the anchor rule, the relaxation, and the vertex representation can each be varied independently.
As noted in the introduction, no such solver is currently available.
To improve the accessibility and reproducibility of research in this area, we introduce \texttt{polyblocks}, an open-source Python package implementing the algorithms proposed in this paper.

The package serves two complementary purposes.
First, it provides efficient reference implementations of both the naive and tree-based variants of Algorithm~\ref{alg:poa_v3}.
Second, it provides a modular framework for developing and evaluating new POA variants.
Key algorithmic components -- including the policy used for selecting vertices $x^{t,j}$ and their corresponding anchors $y^{t,j}$ -- can be modified independently, allowing researchers to prototype and benchmark new algorithmic ideas with minimal changes to the core implementation.

The package is implemented in pure Python using \texttt{NumPy} \cite{numpy_package} together with \texttt{Numba} \cite{numba_package}, a just-in-time compiler for a subset of the Python language and \texttt{NumPy} functions.
This approach combines the readability and flexibility of Python with performance comparable to implementations based on compiled language extensions \cite{numba_performance_study}, making the package suitable for both research and practical experimentation.

As a demonstration, Listing~\ref{code:poly_example} shows how \texttt{polyblocks} can be used to solve the monotonic optimisation problem
\begin{align}~\label{eq:poly_example}
\begin{split}
    \max_{x \in \mathbb{R}^2 } \quad & x^2_1 + x^2_2  \\
    \ \text{s.t.} \quad &  x_1 + x_2 \leq 1, \\
    & 0 \leq x_1 \leq 1, \\
    &  0 \leq x_2 \leq 0.5
\end{split}
\end{align}
using the tree-based implementation of Algorithm~\ref{alg:poa_v3}.
\begin{listing}[h]
    \begin{lstlisting}
    from polyblocks import TreePOA

    result = TreePOA.solve(
        obj=lambda x: (x**2).sum(1),          # objective
        ub_oracle=lambda x: x.sum(1) <= 1.0,  # normal set oracle
        x_l=(0., 0.),
        x_u=(1., 0.5),
    )

    print(result.x)    # (*@$\approx [1.0, 0.0]$@*)
    print(result.obj)  # (*@$\approx 1.0$@*)
    \end{lstlisting}
    \caption{A code example using the \texttt{polyblocks} package to solve problem \eqref{eq:poly_example}}
    \label{code:poly_example}
\end{listing}

Observe that the objective and oracle functions are assumed to accept batches of vertices, allowing POA's subroutines to exploit vectorised computation.
The complete source code, documentation, and additional examples are available in the project's online repository.

\section{Numerical results}\label{sec:results}
Using \texttt{polyblocks}, we now measure how much each of the proposed improvements contributes in practice, both individually and in combination.

\paragraph{Algorithms}
To isolate the effect of each proposed improvement, we compare five POA variants forming a cumulative ablation: each variant adds a single improvement to the one preceding it, so that \texttt{Base} contains none of our contributions while \texttt{Vectorised} contains all of them.
Table~\ref{tab:variants} summarises the resulting configurations.

\begin{table}[h]
    \centering
    \caption{The five POA variants considered. Each variant adds one improvement to the one above it.}
    \label{tab:variants}
    \small\begin{tabular}{llcccc} \toprule
        \multirow{2}*{Variant} & \multirow{2}*{Algorithm} &
        Balanced & $\delta$-relaxed & Tree-based & Vectorised \\
        & & anchors & optimality & implementation & refinement \\
        & & (\S\ref{ssec:balanced_anchors}) & (\S\ref{ssec:relaxed_opt}) &
        (\S\ref{sec:implementation}) & (\S\ref{ssec:vectorising}) \\ \midrule
        \texttt{Base}       & Alg.~\ref{alg:poa}     &            &            &            &            \\
        \texttt{Balanced}   & Alg.~\ref{alg:poa}     & \checkmark &            &            &            \\
        \texttt{Relaxed}    & Alg.~\ref{alg:poa_v2}  & \checkmark & \checkmark &            &            \\
        \texttt{TreeBased} & Alg.~\ref{alg:poa_v2}  & \checkmark & \checkmark & \checkmark &            \\
        \texttt{Vectorised} & Alg.~\ref{alg:poa_v3}  & \checkmark & \checkmark & \checkmark & \checkmark \\
        \bottomrule
    \end{tabular}
\end{table}

The first two variants differ only in their choice of projection anchor, using the fixed anchors \eqref{eq:old_anchor} with $\rho = 0.2$, and the balanced anchors \eqref{eq:new_anchors} respectively.
The first three use the naive implementation described in Section~\ref{ssec:poa_bottlenecks}, while the remaining two use the tree-based implementation of Section~\ref{sec:implementation}.
The \texttt{Vectorised} variant refines $k=8$ vertices per iteration.

\paragraph{Test problems}
We evaluate these algorithms on three classes of monotonic problems, all of the form
$$\max_{x \in [0,1]^n} f(x),
\quad \text{s.t.} \ g(x) \leq 0, \ h(x)\geq 0,$$
where each of $f$, $g$ and $h$ is drawn from the same class: a quadratic $x^TQx + q^Tx$, a monotone neural network $w^T(Vx + v)^+$, or a step function obtained by rounding such a network to the nearest $0.02$.
All parameters are non-negative, $Q \in \mathbb{R}^{n\times n}_+$, $q \in \mathbb{R}^{n}_+$, $V \in \mathbb{R}^{m\times n}_+$ and $w \in \mathbb{R}^{m}_+$, apart from the bias $v \in \mathbb{R}^{m}$, which leaves every such function increasing and the resulting problems monotonic.
Rounding preserves monotonicity while making the function discontinuous.
We consider problems in 4, 5, and 6 dimensions.
For each dimension and class, we evaluate the algorithms on a fixed set of 20 randomly drawn instances.

\paragraph{Experimental setup}
We ran all experiments on a single machine with 16GB RAM and an AMD Ryzen 5 3600 processor.
All algorithms use a relative value of $\epsilon$ equal to $0.01$ and, where applicable, a relaxation value of $\delta = 0.001$.
For each problem instance, we limited each solver to run for a maximum of an hour and for at most $10^8$ POA iterations.
To control memory usage and runtime, we limited tree-based implementations to at most $2 \times 10^8$ tree nodes $|T^t|$ and naive implementations to at most $3 \times 10^6$ vertices $|V^t|$.
We used a reduced limit for naive implementations as they were found to stall much earlier than their tree-based counterparts.
A solver fails to find a solution if it exceeds the maximum runtime, iteration, or memory limits.

\begin{figure}[t]
\centering
\includegraphics[width=\textwidth]{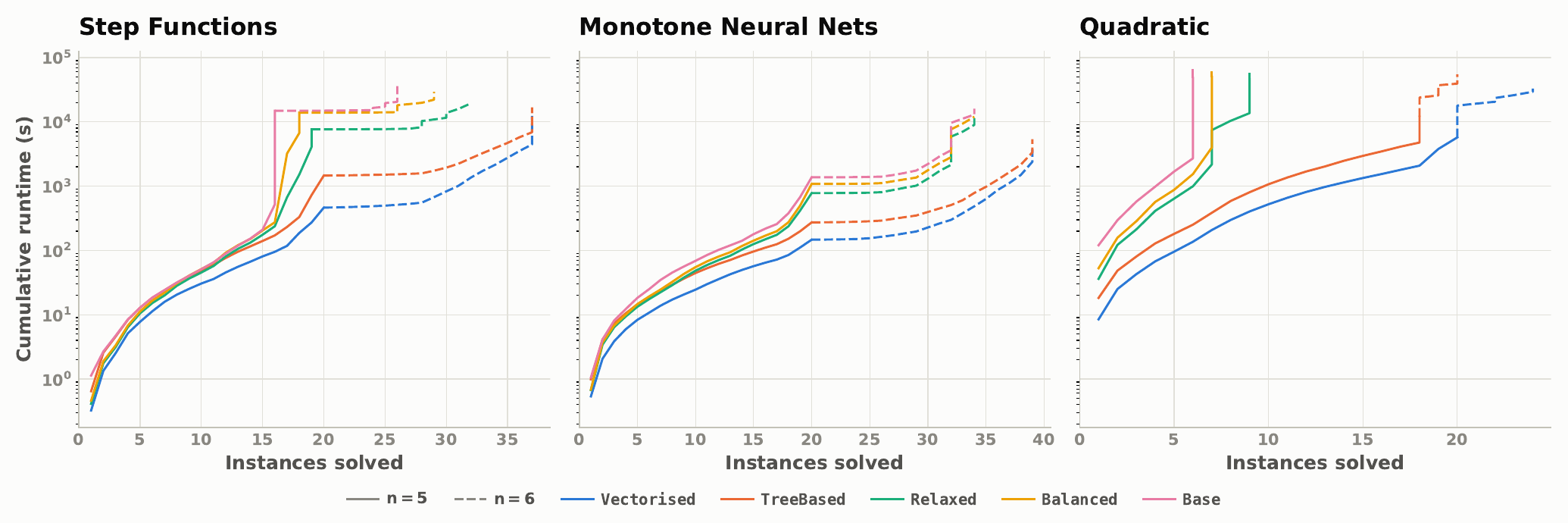}
\caption{
Cumulative runtime vs problem instances solved for each solver variant and each problem class considered.
Runtime includes time spent on problems which a solver fails on.
We consider both five- and six-dimensional instances for each problem class, as shown by the line style of each curve.
Four-dimensional instances are omitted for visual clarity.
}
\label{fig:cactus_plot}
\end{figure}

\paragraph{Discussion}
Figure~\ref{fig:cactus_plot} shows one cactus plot per problem class, giving cumulative runtime against the number of instances solved.
Instances are sorted by runtime along each curve, and those a solver fails on still contribute their runtime to the cumulative total.
The curves are ordered by the ablation on all three classes: each variant reaches any given number of solved instances in less time than the one above it in Table~\ref{tab:variants}, and goes on to solve more of them.
On the step function class, for instance, \texttt{Vectorised} reaches 20 solved instances roughly $30 \times$ faster than \texttt{Base}.

Table~\ref{tab:geometric_means} reports shifted geometric mean runtimes for each problem class, solver, and dimension considered, along with the number of instances successfully solved.
Unlike the cactus plots, the mean for each solver is taken only over the instances it solved, so a solver failing on the hardest instances is scored on the easier ones alone.
The counts should therefore be read first: they increase monotonically along the ablation in every row, most sharply for the five-dimensional quadratics, where they rise from 6 solved to 20.
Where the counts agree the runtimes are directly comparable and improve monotonically as well; where they differ widely, as on the six-dimensional step functions, the weaker solvers' means are depressed by the easy subsets on which they were scored.
The table also isolates the effect of dimension, with runtimes growing exponentially in $n$ for every variant, in line with the vertex-set growth described in Section~\ref{ssec:poa_bottlenecks}.

Of the four improvements, the tree representation yields the largest gains, followed by vectorisation, both of which act directly on that growth.
Replacing the $O(|V^t|)$ scans of Table~\ref{tab:operations} with $O(nD)$ traversals pays off in proportion to the active vertex set: the tree is marginally slower on two of the four-dimensional instances, but advantageous on the five-dimensional quadratics, where it raises the number solved from 9 to 18.
The $\delta$-relaxation curbs the same growth through its partial refinement rule, leaving fewer vertices to be stored and searched.
Its effect is clearest on the neural network instances, where \texttt{Relaxed} and \texttt{Balanced} solve equally many instances at $n = 5$ and $n = 6$, and the relaxation reduces their runtimes by roughly $15\%$ in both cases.
Its larger contribution, however, is to reach rather than to speed: relaxation alone bounds the number of iterations, guaranteeing termination through Theorem~\ref{thm:POA_new}.
The step function class is included partly for this reason, being discontinuous and so outside the guarantee of \cite{sit_algorithm}, while Theorem~\ref{thm:POA_new} applies to it unchanged.
Little is given up in exchange: the relaxed variants solve \eqref{eq:monotone-canon} only to within $\delta$, but on commonly solved instances the shortfall in objective value was negligible compared to the objective tolerance $\epsilon$.

\begin{table}[h]
\centering
\caption{1\,second-shifted geometric mean runtime (s) of solved instances, with the number of instances solved, out of a total of 20, in parentheses.}
\label{tab:geometric_means}
\small\setlength{\tabcolsep}{4.5pt}\begin{tabular}{l l r r r r r }
\toprule
 Problem class & $n$ & \texttt{Vectorised} & \texttt{TreeBased} & \texttt{Relaxed} & \texttt{Balanced} & \texttt{Base} \\
\midrule
Step functions & 4 & 1.2 (20) & 1.8 (20) & 1.3 (20) & 1.5 (20) & 1.7 (20) \\
 & 5 & 8.3 (20) & 15.3 (20) & 18.4 (19) & 18.5 (18) & 10.8 (16) \\
 & 6 & 67.1 (17) & 91.1 (17) & 73.2 (13) & 57.7 (11) & 53.8 (10) \\
\midrule
Monotone Neural Nets & 4 & 0.5 (20) & 0.8 (20) & 0.6 (20) & 0.6 (20) & 0.7 (20) \\
 & 5 & 5.0 (20) & 8.6 (20) & 12.4 (20) & 14.3 (20) & 18.3 (20) \\
 & 6 & 25.8 (19) & 35.9 (19) & 44.0 (14) & 53.1 (14) & 60.9 (14) \\
\midrule
Quadratic & 4 & 6.4 (20) & 9.3 (20) & 10.4 (20) & 13.6 (20) & 18.5 (20) \\
 & 5 & 107.2 (20) & 165.2 (18) & 335.8 (9) & 269.4 (7) & 346.9 (6) \\
 & 6 & 1954.0 (4) & 1970.2 (2) & N/A (0) & N/A (0) & N/A (0) \\
\bottomrule
\end{tabular}
\end{table}

\section{Conclusions}
We have presented a set of algorithmic and implementation improvements addressing the scalability limitations of the polyblock outer-approx\-imation algorithm.

On the algorithmic side, we first generalised POA to admit arbitrary projection anchors, and showed that this additional freedom naturally yields a balanced monotonicity cut which maximises the volume guaranteed to be removed from the search space at each iteration.
We then introduced a $\delta$-relaxed optimality criterion which restores finite termination with an explicit iteration bound (Theorem~\ref{thm:POA_new}), without requiring the continuity assumptions on which previous robust variants rely.
Finally, we proposed a vectorised variant which refines several vertices per iteration, exposing substantially more parallelism than the sequential procedure while retaining the same convergence guarantee.

On the implementation side, we observed that the vertices maintained by POA carry a tree structure induced by the refinement process itself.
Storing this tree, rather than the vertex set alone, allows each of POA's core subroutines to be implemented as a tree traversal instead of a linear scan.
Our numerical experiments confirm that each proposed improvement meaningfully contributes to the overall speed-up, with the largest gains appearing in higher dimensions where the active vertex set is largest.
We additionally introduced \texttt{polyblocks}, an open-source implementation of these algorithms which we hope will lower the barrier to further research in monotonic optimisation.

Several directions remain open.
The iteration bound of Theorem~\ref{thm:POA_new}, while explicit, counts every vertex the refinement tree could conceivably contain and is far more pessimistic than the behaviour observed in practice; establishing tighter bounds under additional structural assumptions would be valuable.
Similarly, the vertex trees generated by POA are empirically far better balanced than the worst case suggests, and a rigorous average-case analysis of the self-balancing behaviour discussed in Section~\ref{ssec:tree_complexity} remains open.
Finally, our vectorised algorithm is currently parallelised using CPU multi-threading.
Since the active vertex set grows rapidly with problem dimension, a GPU implementation capable of processing much larger batches of vertices concurrently is a natural next step.

\section*{Acknowledgements}
We would like to thank Keith Briggs for his suggestions on improving the software presented, and Chris Budd for his feedback on the manuscript.
This research was supported by the EPSRC Programme Grant EP/V026259/1.

\printbibliography
\end{document}